\documentclass{amsart}
\usepackage{amsmath}
\usepackage{amsfonts}
\usepackage{euscript}
\usepackage[toc,page]{appendix}
\usepackage{amssymb}
\usepackage{graphicx}
\usepackage{subcaption}
\usepackage{enumitem}

\usepackage{exscale}
\usepackage{cite}
\usepackage{epsfig}
\usepackage{amscd}
\usepackage{graphics}
\usepackage{color}

\renewcommand{\L}[1]{\EuScript{L}\{#1\}}

\usepackage{amsthm}
\usepackage[utf8]{inputenc}
\usepackage[english]{babel}

\newtheorem{theorem}{Theorem}
\newtheorem{corollary}[theorem]{Corollary}

\newtheorem{question}{Question}
\newtheorem{remark}[theorem]{Remark}

\def\reals{{\mathbb R}}

\def\supp{\mathrm{supp}\,}

\def\O{{\mathcal O}}

\def\phi{\varphi}
\def\half{{\frac{1}{2}}}

\def\be{\begin{eqnarray*}}
\def\ee{\end{eqnarray*}}
\def\ben{\begin{eqnarray}}
\def\een{\end{eqnarray}}

\def\L2R{L_{\text{Rest}}^2}

\def\11{\mathds{1}}

\def\L2c{L^2_{\text{comp}}}

\def\p{\partial}

\def\bu{\bar{u}}

\def\Area{\text{Area}}
\def\tOmega{\tilde{\Omega}}

\begin{document}

 \title[Energy Distribution]{Energy Distribution for Dirichlet
   Eigenfunctions on Right Triangles}

   \author[H. Christianson]{Hans Christianson}
\address[H. Christianson]{ Department of Mathematics, University of North Carolina.\medskip}
 \email{hans@math.unc.edu}
 
 \author[D. Pezzi]{Daniel Pezzi}
\address[D. Pezzi]{ Department of Mathematics, Johns Hopkins.\medskip}
 \email{dpezzi1@jhu.edu}
 
 \begin{abstract}

  In this paper, we continue the study of eigenfunctions on triangles initiated by the first author in \cite{Chr-tri} and \cite{Chr-simp}.  The Neumann data of Dirichlet eigenfunctions on triangles enjoys an equidistribution law, being equidistributed on each side.  The proof of this result is remarkably simple, using only the radial vector field and a Rellich type integrations by parts.  The equidistribution law, including on higher dimensional simplices, agrees with what Quantum Ergodic Restriction would predict. However, distribution of the Neumann data on subsets of a side is not well understood, and elementary methods do not appear to give enough information to draw conclusions.  
  
  In the present note, we first show that an ``obvious'' conjecture fails even for the simplest right isosceles triangle using only Fourier series.  We then use a result of Marklof-Rudnick \cite{Marklof-Rudnick} in which the authors show an interior {\it spatial} equidistribution law for a density one subsequence of eigenfunctions to give an estimate on energy distribution of eigenfunctions on the interior. Finally we present some numerical computations suggesting the behaviour of eigenfunctions on almost isosceles triangles is quite complicated.

 \end{abstract}

\maketitle

\section{Introduction}

 Eigenfunctions on bounded Euclidean domains are used to model many physical and mechanical phenomena, 
  and can be used to construct solutions to separable partial differential equations such as wave and Schr\"odinger type equations.  
  The study of eigenfunctions and solutions to wave type equations are so closely related that concepts like propagation and flow are often used to understand eigenfunctions.
  Since waves propagate along straight lines, it is reasonable to expect eigenfunctions to live along straight lines.   Since 
  waves meeting a smooth boundary  transversely  reflect according to Snell's law,  eigenfunctions have incoming and outgoing components as well. And since waves are very complicated near corners and other discontinuities, so are eigenfunctions.

In this note, we study the distribution of interior energy of eigenfunctions on right triangles, which is a measure of phase space concentration.  If the billiard flow on a reasonable domain is ergodic, then quantum ergodicity \cite{Shn,Zel1,CdV,ZZ} implies that a density one subsequence of the eigenfunctions equidistributes in both space and phase space.  This paper investigates similar properties on triangles, however no assumption about classical ergodicity is made, and instead the theoretical components of this paper rely on several previous results concerning the distribution of Neumann data mass on sides proved by the first author in \cite{Chr-tri} as well as the  spatial equidistribution result of Marklof-Rudnick \cite{Marklof-Rudnick} on rational polygons.   

There are several results in this paper.  First, we use integration by parts to connect interior energy to certain weighted boundary integrals.  A comparison with the results in \cite{Chr-tri}  suggests the eigenfunctions have nice phase space distribution properties, however this appears to be false.  In fact, even on the right isosceles triangle, the weighted boundary integrals are subtle, even though the phase space distribution follows from symmetry.  Second, using the spatial distribution for rational polygons in \cite{Marklof-Rudnick}, we prove that on rational right triangles there is a density one subsequence that has frequency localization estimates, but an asymptotic is unclear. For a sequence $a_j$, a density one subsequence is a subsequence $a_{j_k}$ such that 

\begin{equation}
\label{E:density-1}
    \lim_{N\rightarrow \infty}\frac{\#\{k: j_k\leq N\}}{N} = 1.
\end{equation}
Finally, we provide some numerical data which suggests the weighted boundary integrals do not have an asymptotic, or at least not for the whole sequence of eigenfunctions.

\subsection*{Acknowledgments}
The authors would like to thank Jeremy Marzuola and Idris Assani for reading early drafts of this work.  They would also like to thank the anonymous referee, whose comments and suggestions helped clarify several important parts of the proofs in this paper.  D.P. was supported in part by a UNC-Chapel Hill Summer Undergraduate Research Fellowship.

\section{Theoretical results on right triangles}

Our first set of results is on the right isosceles triangle.  

\begin{theorem}
\label{T:1}
Let $T$ be the right isosceles triangle in the $xy$-plane, oriented as 
\[
T = \{ 0 \leq x \leq 1, \,\, 0 \leq y \leq 1-x \}.
\]
Consider the Dirichlet eigenfunction problem on $T$:
\begin{equation}
\label{E:efcn1}
\begin{cases}
-h^2 \Delta u = u, \text{ on } T, \\
u|_{\p T} = 0, \\
\| u \|_{L^2(T)} = 1,
\end{cases}
\end{equation}
where $\Delta = \p_x^2 + \p_y^2$ and $h^{-2}$ denotes the eigenvalues of $-\Delta$, taking discrete values.  Then there is an orthonormal basis for $L^2$ of eigenfunctions satisfying
\[
\| h \p_x u \|^2_{L^2(T)} = \|h \p_y u \|^2_{L^2(T)} = 1/2.
\]
On the other hand, there exists a subsequence of these eigenfunctions whose Neumann data satisfies
\[
\liminf_{h \to 0} \int_{0 \leq x \leq 1/2 } | h \p_\nu u(x,0) |^2 dx > \limsup_{h \to 0} \int_{1/2 \leq x \leq 1 } | h \p_\nu u(x,0) |^2 dx.
\]

\end{theorem}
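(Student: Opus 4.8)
The plan is to use the explicit Dirichlet eigenbasis on $T$ produced by separation of variables, together with an elementary reflection identity on the bottom side. Since $T$ is half of the unit square $[0,1]^2$ cut along the hypotenuse $\{x+y=1\}$, and since the reflection $R:(x,y)\mapsto(1-y,1-x)$ sends $f_{m,n}(x,y):=\sin(m\pi x)\sin(n\pi y)$ to $(-1)^{m+n}f_{n,m}$, the Dirichlet eigenfunctions on $T$ are the restrictions of the $R$-antisymmetric combinations of the $f_{m,n}$. Concretely, for $m>n\geq 1$ I would set $u_{m,n}=f_{m,n}-f_{n,m}$ when $m+n$ is even and $u_{m,n}=f_{m,n}+f_{n,m}$ when $m+n$ is odd; each vanishes on all three sides and has eigenvalue $\pi^2(m^2+n^2)$, so $h^{-2}=\pi^2(m^2+n^2)$. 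A short computation gives $\|u_{m,n}\|_{L^2(T)}^2=\tfrac14$, so the normalized eigenfunctions are $2u_{m,n}$, and these are exactly the orthonormal basis used in the first part of the theorem (the identity $\|h\p_x u\|^2=\|h\p_y u\|^2=\half$ follows from the $x\leftrightarrow y$ symmetry of $R$ together with $\|h\nabla u\|_{L^2(T)}^2=1$). The asserted subsequence will be drawn from this basis.

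Next I would compute the Neumann data on the bottom side. On $\{y=0\}$ the outward normal is $-\p_y$, and since $\p_y f_{m,n}(x,0)=n\pi\sin(m\pi x)$, the normalized Neumann data satisfies
\[
|h\p_\nu(2u_{m,n})(x,0)|^2=\frac{4}{m^2+n^2}\bigl[n\sin(m\pi x)\pm m\sin(n\pi x)\bigr]^2,
\]
with the sign matching the parity case above. Integrating over all of $[0,1]$ recovers the total boundary mass $2$ predicted by the Rellich/equidistribution identity of \cite{Chr-tri}, a useful consistency check. The crucial step is to compare the two halves via the substitution $x\mapsto 1-x$, using $\sin(k\pi(1-x))=(-1)^{k+1}\sin(k\pi x)$. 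When $m+n$ is even both terms acquire the same sign, the two half-integrals are equal, and there is no asymmetry; this is precisely why the ``obvious'' conjecture is not contradicted along that family. When $m+n$ is odd the two terms acquire opposite signs, the reflected integrand becomes $[n\sin(m\pi x)-m\sin(n\pi x)]^2$, and the cross term survives. Writing $D$ for the left-minus-right difference of Neumann masses, I obtain the closed form
\[
D=\frac{16mn}{m^2+n^2}\int_0^{1/2}\sin(m\pi x)\sin(n\pi x)\,dx,
\]
while the left and right half-masses equal $1+\tfrac{D}{2}$ and $1-\tfrac{D}{2}$ respectively.

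Finally I would evaluate the integral in $D$ by a product-to-sum identity and choose a subsequence making $D$ bounded below. Taking $m=n+1$ (so $m+n=2n+1$ is odd), the prefactor $16mn/(m^2+n^2)\to 8$, while the integral equals $\frac{1}{2\pi}\bigl(1-\frac{(-1)^n}{2n+1}\bigr)\to\frac{1}{2\pi}$, so $D\to\frac{4}{\pi}>0$ as $n\to\infty$. Along this subsequence the left half-mass tends to $1+\frac{2}{\pi}$ and the right half-mass to $1-\frac{2}{\pi}$, giving the strict inequality with room to spare. I expect the main obstacle to be bookkeeping rather than analysis: one must correctly identify the two parity families, verify that the asymmetry genuinely cancels for $m+n$ even (so the subsequence must come from the odd family), and keep the normalization consistent so that the total mass is exactly $2$. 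Evaluating $\sin((2n+1)\pi/2)=(-1)^n$ and checking that $16mn/(m^2+n^2)$ stays bounded below away from $0$ — which forces $m/n$ to be bounded and motivates the choice $m=n+1$ — are the only remaining points needing care.
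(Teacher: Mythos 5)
Your proposal is correct and follows essentially the same route as the paper: unfold the triangle to the unit square to get the explicit eigenbasis $\pm(f_{m,n}\pm f_{n,m})$ with normalization constant $2$, compute the Neumann data on $\{y=0\}$, observe that the left/right asymmetry is carried entirely by the cross term $\frac{16mn}{m^2+n^2}\int_0^{1/2}\sin(m\pi x)\sin(n\pi x)\,dx$ (which vanishes for $m+n$ even), and take $m=n+1$ to get half-masses tending to $1\pm\frac{2}{\pi}$, exactly the paper's limit $I_l\to\frac12+\frac1\pi$. The only cosmetic difference is that you extract the difference of the two half-integrals via the substitution $x\mapsto 1-x$ rather than by antidifferentiating directly, and your "$x\leftrightarrow y$ symmetry" for the first claim should be attributed to the swap $(x,y)\mapsto(y,x)$ preserving $T$ rather than to $R$ itself.
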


\begin{remark}
This theorem shows that this sequence of eigenfunctions has a subsequence which is not quantum ergodic on the boundary, even though the eigenfunctions are weakly equidistributed in phase space.

\end{remark}

Our second main result is about phase space distribution on rational right triangles.
  The result is heavily dependent on the choice of orientation for the triangle, and rotating the triangle changes the result.  This result is then meant merely as an example of what one can prove with elementary techniques.

Let $\Omega \subset \reals^2$ be a right triangle, and 
consider the semiclassical Laplace eigenfunction problem \eqref{E:efcn1}.

After rescaling and rotating, 
assume $\Omega$ is oriented so that it can be written as $ \Omega = \{
(x,y): 0 \leq x \leq a, \,\, 0 \leq y \leq 1-\frac{x}{a} \}$.  Let $F_1,F_2,F_3$ be
the sides as in Figure \ref{F:acute}.

 \begin{figure}
\hfill
\centerline{
\begingroup%
  \makeatletter%
  \providecommand\color[2][]{%
    \errmessage{(Inkscape) Color is used for the text in Inkscape, but the package 'color.sty' is not loaded}%
    \renewcommand\color[2][]{}%
  }%
  \providecommand\transparent[1]{%
    \errmessage{(Inkscape) Transparency is used (non-zero) for the text in Inkscape, but the package 'transparent.sty' is not loaded}%
    \renewcommand\transparent[1]{}%
  }%
  \providecommand\rotatebox[2]{#2}%
  \newcommand*\fsize{\dimexpr\f@size pt\relax}%
  \newcommand*\lineheight[1]{\fontsize{\fsize}{#1\fsize}\selectfont}%
  \ifx\svgwidth\undefined%
    \setlength{\unitlength}{211.04851341bp}%
    \ifx\svgscale\undefined%
      \relax%
    \else%
      \setlength{\unitlength}{\unitlength * \real{\svgscale}}%
    \fi%
  \else%
    \setlength{\unitlength}{\svgwidth}%
  \fi%
  \global\let\svgwidth\undefined%
  \global\let\svgscale\undefined%
  \makeatother%
  \begin{picture}(1,0.52515152)%
    \lineheight{1}%
    \setlength\tabcolsep{0pt}%
    \put(0,0){\includegraphics[width=\unitlength,page=1]{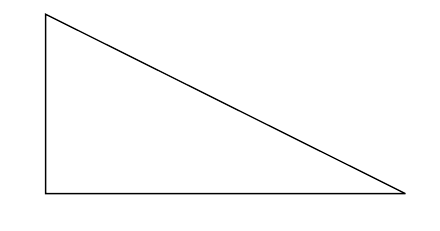}}%
    \put(-0.00268956,0.28243907){\color[rgb]{0,0,0}\makebox(0,0)[lt]{\lineheight{1.25}\smash{\begin{tabular}[t]{l}$F_1$\end{tabular}}}}%
    \put(0.40270489,0.00626405){\color[rgb]{0,0,0}\makebox(0,0)[lt]{\lineheight{1.25}\smash{\begin{tabular}[t]{l}$F_2$\end{tabular}}}}%
    \put(0.44324439,0.34831562){\color[rgb]{0,0,0}\makebox(0,0)[lt]{\lineheight{1.25}\smash{\begin{tabular}[t]{l}$F_3 = \{y=1-\frac{x}{a} \}$\end{tabular}}}}%
    \put(0.07838931,0.02400013){\color[rgb]{0,0,0}\makebox(0,0)[lt]{\lineheight{1.25}\smash{\begin{tabular}[t]{l}$0$\end{tabular}}}}%
    \put(0.89677944,0.00626405){\color[rgb]{0,0,0}\makebox(0,0)[lt]{\lineheight{1.25}\smash{\begin{tabular}[t]{l}$a$\end{tabular}}}}%
    \put(0.01251272,0.4978048){\color[rgb]{0,0,0}\makebox(0,0)[lt]{\lineheight{1.25}\smash{\begin{tabular}[t]{l}$1$\end{tabular}}}}%
  \end{picture}%
\endgroup%
}
\caption{\label{F:acute}  Setup for right triangles}
\hfill
\end{figure}

We further assume that $\Omega$ is rational, meaning that all angles
are rational multiples of $\pi$.  
A result of Marklof-Rudnick \cite{Marklof-Rudnick} shows that in this case, there is a
density one (in the sense of \eqref{E:density-1}) subsequence of eigenfunctions $u_{j_k}$  which equidistribute in space
\[
\int_{U} | u_{j_k} |^2 dV \to \frac{\Area (U)}{\Area (\Omega)}
\]
as $h_{j_k} \to 0$.

We will work with this subsequence, and prove the following Theorem:
\begin{theorem}
\label{T:2}
Suppose $\Omega$ is the right triangle oriented as in Figure \ref{F:acute}.  Let $\{ u_j\}$ be the sequence of orthonormal Dirichlet eigenfunctions on $\Omega$.  There exists a density one subsequence $\{u_{j_k}\}$ such that
\[
\limsup_{k\to\infty} \int_\Omega | h \p_x u_{j_k}|^2 dV \leq \frac{7}{8}.
\]
\end{theorem}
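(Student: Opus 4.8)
The plan is to pair a Rellich--Pohozaev identity with the Marklof--Rudnick interior equidistribution. Testing $-h^2\Delta u=u$ against $\bar u$ and using $u|_{\p\Omega}=0$ gives $\int_\Omega|h\p_x u|^2\,dV+\int_\Omega|h\p_y u|^2\,dV=1$, so the theorem is equivalent to the lower bound $\int_\Omega|h\p_y u|^2\,dV\ge\frac18$ along the subsequence. To isolate the difference of the two energies I would test the equation against $Vu$ for a real vector field $V=(A,B)$ and take real parts, which yields
\begin{multline*}
-\int_\Omega(\div V)\,|u|^2\,dV=h^2\int_\Omega(\p_x A-\p_y B)(|\p_x u|^2-|\p_y u|^2)\,dV\\+2h^2\int_\Omega(\p_x B+\p_y A)\,\p_x u\,\p_y u\,dV-h^2\int_{\p\Omega}(V\cdot\nu)\,|\p_\nu u|^2\,dS.
\end{multline*}
The sign-indefinite gradient term and the cross term are the obstruction, and the key point is that \emph{both} vanish exactly when $A+iB$ solves $\p_{\bar z}(A+iB)=1$, i.e. when $A+iB=\bar z+h(z)$ for a holomorphic $h$. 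For such $V$ one has $\p_x A-\p_y B=2$, $\p_x B+\p_y A=0$, and $\div V=2\,\Re h'$, so the identity collapses to
\[
2\Big(\int_\Omega|h\p_x u|^2\,dV-\int_\Omega|h\p_y u|^2\,dV\Big)=h^2\int_{\p\Omega}(V\cdot\nu)\,|\p_\nu u|^2\,dS-2\int_\Omega(\Re h')\,|u|^2\,dV.
\]

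Next I would choose $h$ so that the (uncomputable) boundary term is \emph{non-positive} and so that the interior weight $\Re h'$ is accessible to equidistribution. Consider first $a\ge1$ and take $h(z)=\gamma z+\beta z^2$ with real $\gamma,\beta$; then $\Re h'=\gamma+2\beta x$ is affine. One cannot make $V\cdot\nu$ vanish on all three sides, so instead I impose $V\cdot\nu\le0$ on $\p\Omega$. On $F_1=\{x=0\}$ this forces $\beta\le0$, on $F_2=\{y=0\}$ it is automatic, and on the hypotenuse $F_3$, after substituting $y=1-x/a$, the quantity $A+aB$ is a \emph{convex} quadratic in $x$ (since $\beta\le0$), so it suffices to check the two endpoint inequalities at $x=0$ and $x=a$. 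These reduce to a pair of linear constraints on $(\gamma,\beta)$ that are jointly solvable for every $a\ge1$; verifying this sign condition is the delicate, but entirely elementary, step. The remaining orientations $a<1$ are handled by the same scheme once $h$ is allowed to be a higher-degree polynomial.

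With such a $V$ the boundary integral is $\le0$ and may be dropped, giving $\int_\Omega|h\p_x u|^2\,dV-\int_\Omega|h\p_y u|^2\,dV\le-\int_\Omega(\Re h')\,|u|^2\,dV$. Since $\Re h'$ is continuous, along the Marklof--Rudnick density-one subsequence $\{u_{j_k}\}$ the right-hand side converges to $-\frac{1}{\Area(\Omega)}\int_\Omega(\gamma+2\beta x)\,dV=-(\gamma+\tfrac{2\beta a}{3})$, using that the centroid has $x$-coordinate $a/3$. Feeding this into the energy balance yields
\[
\limsup_{k\to\infty}\int_\Omega|h\p_x u_{j_k}|^2\,dV\le\frac12-\frac12\Big(\gamma+\frac{2\beta a}{3}\Big),
\]
and optimizing $(\gamma,\beta)$ over the admissible region gives a bound of the form $1-\frac{a^2}{3(a^2+1)}$, which for every $a\ge1$ is at most $5/6<\tfrac78$.

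I expect the main obstacle to be precisely the boundary term: it encodes the Neumann data, which by Theorem \ref{T:1} need not equidistribute and cannot be evaluated, so the whole argument hinges on engineering $V$ with $V\cdot\nu\le0$ while keeping $\int_\Omega\Re h'\,dV$ as negative as possible. Balancing these two demands is what pins down the admissible $h$ and hence the constant in the theorem.
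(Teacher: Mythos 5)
Your Pohozaev identity is correct (I checked the signs: with $V=(A,B)$, $A+iB=\bar z+g(z)$ for holomorphic $g$, one gets $\p_xA-\p_yB=2$, $\p_xB+\p_yA=0$, $\div V=2\Re g'$, and the indefinite terms vanish exactly), and the strategy of engineering $V\cdot\nu\le 0$ so that the uncontrollable Neumann term can be discarded is a genuinely different route from the paper. The paper instead runs two Rellich commutator arguments: one with $X=\chi(x)\p_x$ to bound $\int\chi'|h\p_xu|^2$ crudely by $\int\chi'(|h\p_xu|^2+|h\p_yu|^2)=\int\chi'|u|^2$ in a thin strip (this is where Marklof--Rudnick enters and where the factor of $2$ is lost), converting this into an upper bound for the Neumann data on part of the hypotenuse, and a second one with $X=(1-x/a)\p_x$ to trade $\int_\Omega|h\p_xu|^2$ for a weighted hypotenuse integral, finally optimizing over the strip location to get $7/8$ for every $a$. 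Your argument avoids the crude strip estimate entirely, and for $a\ge1$ it is complete and sharper: I verified your constraint analysis ($\beta\le0$ from $F_1$, the two endpoint conditions $\gamma\le 1+\beta/a$ and $\gamma\le-1-a\beta$ from the convex quadratic on $F_3$), and the linear program maximizing $\gamma+\tfrac{2a\beta}{3}$ is solved at $\beta=-2a/(a^2+1)$, $\gamma=(a^2-1)/(a^2+1)$, giving exactly your bound $(2a^2+3)/(3(a^2+1))\le 5/6$ for $a\ge1$. (For $a=1$ this is $g(z)=-z^2$, and $V\cdot\nu$ is indeed $\le0$ on all three sides.)

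The genuine gap is the case $a<1$, which the theorem covers (the paper only normalizes the vertical leg to $1$; its proof yields $7/8$ uniformly in $a$). Your quadratic ansatz gives $(2a^2+3)/(3(a^2+1))$, which exceeds $7/8$ once $a^2<3/5$ and tends to $1$ as $a\to0$, and the sentence ``handled by the same scheme once $h$ is allowed to be a higher-degree polynomial'' is an assertion, not a proof. Moreover there is a structural reason this is delicate: since $\div V=2\Re g'$, the divergence theorem forces
\[
-\frac{1}{\Area(\Omega)}\int_\Omega \Re g'\,dV=-\frac{1}{2\Area(\Omega)}\int_{\p\Omega}V\cdot\nu\,dS,
\]
so to beat $7/8$ you need $-\int_{\p\Omega}V\cdot\nu\,dS\le\tfrac32\Area(\Omega)=\tfrac{3a}{4}$, i.e.\ the total boundary ``leakage'' must be $O(a)$ even though the sides $F_1$ and $F_3$ have length comparable to $1$. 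For small $a$ the constraints on $F_1$ and $F_3$ squeeze $\Re g(iy)$ into the window $[0,O(a)]$ along the entire segment $\{x=0,\ 0\le y\le1\}$ while $\Re g$ remains harmonic; whether a polynomial (or any holomorphic) $g$ achieves this together with the sign condition on $F_3$ is exactly the nontrivial boundary-value problem you have deferred. Until that construction is exhibited (or the small-$a$ case is handled some other way, e.g.\ by the paper's strip argument), the proof covers only $a\ge\sqrt{3/5}$. Two minor points: the theorem needs $\Omega$ rational for Marklof--Rudnick (stated in the surrounding text, used by both proofs), and upgrading $\int_U|u|^2\to\Area(U)/\Area(\Omega)$ to $\int f|u|^2\to\frac{1}{\Area(\Omega)}\int f$ for continuous $f$ is routine but should be said; also avoid using $h$ for both the semiclassical parameter and the holomorphic correction.
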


\begin{remark}
We again emphasize that this estimate is highly dependent on the orientation of the triangle.  The same proof works for the $y$-derivatives, so that, given $\epsilon>0$, there exists $K$ such that $k \geq K$ implies
\[
\frac{1}{8}-\epsilon \leq \int_\Omega | h \p_x u_{j_k} |^2 dV \leq \frac{7}{8} + \epsilon.
\]
A rotation of the triangle into different coordinates $(s,t)$  is
\[
h \p_x = \alpha h \p_s + \beta h \p_t , \,\,\, h \p_y = -\beta h \p_s + \alpha h \p_t,
\]
where $\alpha^2 + \beta^2 = 1$.  Plugging in to our estimate gives
\[
\frac{1}{8}-\epsilon \leq \int_\Omega | (\alpha h \p_s + \beta h \p_t )u_{j_k} |^2 dV \leq \frac{7}{8} + \epsilon
\]
and similarly
\[
\frac{1}{8}-\epsilon \leq \int_\Omega | (  -\beta h \p_s + \alpha h \p_t )u_{j_k} |^2 dV \leq \frac{7}{8} + \epsilon.
\]
Expanding these quantities does not give us much information unless one of $\alpha$ or $\beta$ is close to zero.

\end{remark}

\begin{remark}
The main idea of the proof is to 
estimate the mass of $h \p_x u$ in strips with that of $u$ in strips.  We then use this
and the results from \cite{Chr-tri} on Neumann data on a whole side to get weak estimates on partial Neumann data.

\end{remark}


\subsection{Quantum Ergodicity}
Roughly speaking, quantum ergodicity (QE) for planar domains states that if the classical billiard flow is ergodic, then there is a density one subsequence of eigenfunctions which equidistribute in phase space \cite{Shn,Zel1,CdV,ZZ}.  That is, this subsequence of eigenfunctions distributes evenly both on the domain and in frequency.  The work of Lindenstrauss \cite{Lindenstrauss} shows that quantum ergodicity can hold for the whole sequence of eigenfunctions, called quantum unique ergodicity (QUE).  The work of Hassell \cite{Hassell-stadium} shows that QUE can fail, so the question of QUE versus non-QUE is very subtle.

In related work, Hassell-Zelditch \cite{HaZe} show that the boundary Neumann data of Dirichlet (and Dirichlet data of Neumann) eigenfunctions satisfy a natural quantum ergodic property, called quantum ergodicity of restrictions (QER).  Work of Toth-Zeldtich \cite{ToZe-1,ToZe-2} extends these results to interior hypersurfaces, again along a density one subsequence.  The work of the first author and Toth-Zelditch \cite{CTZ} proves that QUE implies quantum unique ergodicity for restrictions (QUER) to interior hypersurfaces, at the expense of needing both the (weighted) Dirichlet and Neumann data for the equidistribution.

In \cite{Chr-tri} (see also \cite{Chr-simp} in higher dimensions), the first author proves that for {\it any} planar triangle, the Neumann data of Dirichlet eigenfunctions satisfies an equidistribution identity on each side:

\begin{theorem}[\cite{Chr-tri,Chr-simp}]
Let T be a planar triangle with sides A, B, C of length a, b, c respectively. Consider the (semi-classical) Dirichlet eigenfunction problem \eqref{E:efcn1}
and assume the eigenfunctions are normalized ($||u||^2_{L^2(T)} = 1$).

Then the (semi-classical) Neumann data on the boundary satisfies 
\begin{align}
    \int_A |h\partial_\nu u|^2 dS = \frac{a}{Area(T)}\\
    \int_B |h\partial_\nu u|^2 dS = \frac{b}{Area(T)}\\
    \int_C |h\partial_\nu u|^2 dS = \frac{c}{Area(T)}
\end{align}
where  $h\partial_\nu$ is the semi-classical normal derivative on $\partial T$, $dS$ is the arc-length measure, and $Area(T)$ is the area of the triangle T.
\end{theorem}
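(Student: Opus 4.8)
The plan is to obtain all three identities from a single Rellich--Pohozaev integration by parts, applied three times with the radial vector field recentered at each of the three vertices. Since the Dirichlet Laplacian is real and self-adjoint, I may take $u$ real-valued. Fix a vertex $P_0$ of $T$ and let $V(P) = P - P_0$ be the radial vector field based there, so that $\p_i V_j = \delta_{ij}$ and $\div V = 2$. Starting from the pointwise divergence identity
\[
\div\!\Big[ (V\cdot\nabla u)\,\nabla u - \half\, V\,|\nabla u|^2 \Big] = (\p_i V_j)\,\p_i u\,\p_j u + (V\cdot\nabla u)\,\Delta u - \half (\div V)\,|\nabla u|^2,
\]
the special structure of $V$ makes the first and third terms on the right cancel, since $(\p_i V_j)\,\p_i u\,\p_j u = |\nabla u|^2 = \half(\div V)|\nabla u|^2$. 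Integrating over $T$ and applying the divergence theorem therefore leaves
\[
\int_{\p T}\Big[ (V\cdot\nabla u)\,\p_\nu u - \half (V\cdot\nu)\,|\nabla u|^2\Big] dS = \int_T (V\cdot\nabla u)\,\Delta u \, dV.
\]

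Next I would simplify both sides. On $\p T$ the Dirichlet condition forces the tangential derivative of $u$ to vanish, so $\nabla u = (\p_\nu u)\,\nu$ there; hence $V\cdot\nabla u = (V\cdot\nu)\,\p_\nu u$ and $|\nabla u|^2 = |\p_\nu u|^2$ on the boundary, and the left-hand side collapses to $\half\int_{\p T}(V\cdot\nu)\,|\p_\nu u|^2\,dS$. The geometry of the radial field is what localizes this to a single side: along each of the two edges emanating from $P_0$ the field $V(P) = P-P_0$ is tangent, so $V\cdot\nu = 0$ there, while along the opposite side $V\cdot\nu$ is constant and equals the distance from $P_0$ to the line containing that side, i.e. the height $h_{P_0}$ from $P_0$. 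For the right-hand side, I would insert $\Delta u = -h^{-2} u$ and integrate by parts once more: since $\div V = 2$ and $u$ vanishes on $\p T$,
\[
\int_T (V\cdot\nabla u)\,u\,dV = \half\int_T V\cdot\nabla(u^2)\,dV = -\int_T u^2\,dV = -\|u\|_{L^2(T)}^2 = -1.
\]

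Combining these, the identity becomes $\half\, h_{P_0}\int_{A}|\p_\nu u|^2\,dS = h^{-2}$, where $A$ is the side opposite $P_0$; rewriting $|\p_\nu u|^2 = h^{-2}|h\p_\nu u|^2$ and using the area relation $\Area(T) = \half\, a\, h_{P_0}$ (with $a$ the length of $A$) yields exactly $\int_A |h\p_\nu u|^2\,dS = a/\Area(T)$. Repeating the argument with $P_0$ taken to be each of the other two vertices produces the remaining two identities. The step I expect to require the most care is the integration by parts itself: the divergence theorem is being applied on a domain with corners, where the eigenfunction need not be smooth, so I would justify the boundary manipulations by excising small neighborhoods of the vertices and passing to the limit, using that Dirichlet eigenfunctions on a convex polygon lie in $H^2$ and have $L^2$ Neumann data on each side, so that the corner contributions vanish. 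Once this regularity input is in hand, the computation above is exact and gives the stated equidistribution.
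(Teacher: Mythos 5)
Your Pohozaev/divergence-identity computation is correct, and it is the same argument as the cited proof in \cite{Chr-tri}: the paper's method is precisely a Rellich-type integration by parts with the radial vector field recentered at each vertex (your divergence identity is the unpacked form of the commutator identity $\int_T([-h^2\Delta-1,X]u)\bar u\,dV = \int_{\p T}(hXu)\,h\p_\nu\bar u\,dS$ with $X=(P-P_0)\cdot\nabla$), and the geometric observation that $V\cdot\nu$ vanishes on the two adjacent sides and equals the altitude on the opposite side is exactly how the localization to a single side is achieved there. Your final bookkeeping $h_{P_0}\int_A|h\p_\nu u|^2\,dS=2=a\cdot h_{P_0}/\Area(T)$ matches the stated identity, so the proposal is correct and essentially identical to the paper's approach.
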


%

\begin{remark}
This property is called `equidistribution' as the Neumann data on each side is proportional to the length of that side, and the quantities are exactly what would be predicted if QUER was satisfied on the boundary.  However, we stress that the integrals need to be over the {\it whole} side.  Distribution of Neumann data over subsets of the sides is the topic of this paper, and indeed Theorem \ref{T:1} shows this fails in the simplest possible case of a right isosceles triangle.

%

\end{remark}

There are several natural questions that arise based on this result. What can be said about the Neumann data on subsets of sides? Can we get an analogous result for subsets, even if we only consider results in a high energy limit or subsequences of a specific density? What about volume integrals over the same domain?

To answer these questions in Euclidean space, we will begin by dealing with the case of a right isosceles triangle as we have explicit solutions to work with. We will then move on to numerical results which will allow us to get data from triangles to properly set expectations for these tough analytical problems. 

\subsection{Immediate Questions}
Based on this result, this paper is concerned with two immediate questions.

\begin{question}
Is it true that 
\begin{equation}
    \forall \omega\subset \partial T, \,\, \lim_{h\rightarrow 0} \int_\omega|h\partial_\nu u|^2dS \rightarrow \frac{m(\omega)}{Area(T)},
\end{equation} 
where $m(\omega)$ is the measure of the set $\omega$?
 \end{question}
This is just an extension of the equidistribution result to arbitrary subsets. A second obvious question would be the following: 
\begin{question}
Is it true that 
\begin{equation}
    \forall h>0, \,\,\int_T |h\partial_y u|^2dV = \int_T |h\partial_x u|^2dV = \frac{1}{2}?
\end{equation}
\end{question}

\subsection{Connecting boundary integrals to interior energy}
Let us continue to work with the right triangle given by Figure \ref{F:acute}.  We duplicate the argument from \cite{Chr-tri} but with the vector field $X = x \p_x$.  The point is that $X = 0$ on $\{ x = 0 \}$ and $X$ is tangential on $\{ y = 0 \}$.  Along the side $F_3 = \{ 0 \leq x \leq a: \, y = 1 - \frac{x}{a} \}$ we have the tangent derivative is $\p_\tau = \gamma^{-1} ( a \p_x - \p_y)$ where $\gamma = (1 + a^2)^{1/2}$.  The normal derivative is then $\p_\nu  = \gamma^{-1} (\p_x + a \p_y )$.  Since $u = 0$ along $F_3$, we have $\p_\tau u = \gamma^{-1} (a \p_x - \p_y ) u = 0$, or $\p_y u = a \p_x u$ on $F_3$.  Hence
\[
\p_\nu u = \gamma^{-1} (\p_x + a \p_y ) u = \gamma^{-1} (1 + a^2) \p_x u = \gamma \p_x u,
\]
so that
\[
\p_x u = \gamma^{-1} \p_\nu u
\]
 along $F_3$.

Then using the same integrations by parts as in \cite{Chr-tri}, we have
\begin{equation}
\label{E:dx-comm-1001}
\int_\Omega ([-h^2 \Delta -1, X] u) \bu dV = 2 \int_\Omega (-h^2 \p_x^2 u ) \bu dV = 2 \int_\Omega | h \p_x u |^2 dV.
\end{equation}
On the other hand, unpacking the commutator and applying Green's formula just like in \cite{Chr-tri}, we have
\begin{align}
\int_\Omega & ([-h^2 \Delta -1, X] u) \bu dV \notag \\
& = \int_{\p \Omega} (hX u) h \p_\nu \bu dS \notag \\
& = \int_{F_3} x (h \p_x u) h \p_\nu \bu dS \notag \\
& = \gamma^{-1}\int_{F_3}x | h \p_\nu u|^2 dS.
\end{align}
This shows that, {\it if} we knew that the Neumann data along $F_3$ was quantum uniquely ergodic on subsets of the side, we would have
\[
 \gamma^{-1}\int_{F_3}x | h \p_\nu u|^2 dS = \frac{1}{2} \gamma^{-1} \int_{F_3} | h \p_\nu u|^2 dS + o(1).
 \]
 Then from \cite{Chr-tri} we know the integral on the right is equal to $1$.  Rearranging and equating to \eqref{E:dx-comm-1001}, this computation {\it would} tell us that
 \[
 \int_\Omega |h \p_x u |^2 dV = 1/2 + o(1),
 \]
 however this ``obvious'' conjecture appears to be false.

 Similar computations with vector fields like $X = y \p_x$ connects the quantity $\int_\Omega (h \p_x u )(h \p_y \bu) dV$ to other weighted boundary integrals,
 so weighted boundary integrals are essential to understanding interior energy distribution of eigenfunctions.

\section{Analytical Results for Right Isosceles Triangle}
\subsection{Introducing the eigenfunctions on the Right Isosceles Triangle}
As we have explicit formulas for eigenfunctions of the Laplacian on a right isosceles triangle, we will study these functions both to prove conclusively some results and as a baseline for results we discuss later on almost isosceles triangles. For this section, $T$ will be a triangle in Euclidean space with vertices $(0,0), (1,0),$ and $(0,1)$. For the rest of this paper, we will deal with triangles with vertices at the origin and at $(0,1)$. We will identify triangles by the x coordinate of the third vertex, which will always be on the positive x-axis.

\begin{theorem}
Let $T$ be as previously described. Then the following formula exhausts all of the eigenfunctions of the Laplacian on $T$ that satisfy Dirichlet boundary conditions with $m,n \in \mathbb{Z}, m\neq n$.

\begin{equation}
    u_{m n} = c_{mn}\sin(n\pi x)\sin(m\pi y) + d_{mn}\sin(m\pi x)\sin(n\pi y).
\end{equation}

With the additional constraint that $c_{m n} = d_{m n}$ if $m$ and $n$ are of opposite parity and $c_{m n} = -d_{m n}$ if $m$ and $n$ have the same parity. Additionally, by normalization, $c^2_{m n} = d^2_{m n}=4$.
\end{theorem}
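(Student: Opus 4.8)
The plan is to exploit the fact that $T$ is exactly half of the unit square $Q = [0,1]^2$, cut along its hypotenuse $\{x+y=1\}$, and to transfer the problem to $Q$, where the Dirichlet spectrum is completely explicit. Let $R$ be the reflection across the hypotenuse, $R(x,y) = (1-y,1-x)$; this is an isometry of $Q$ that fixes the hypotenuse pointwise and interchanges $T$ with the complementary triangle $T' = \{x+y \ge 1\}$. First I would set up the unfolding correspondence: given a Dirichlet eigenfunction $u$ on $T$ with $-\Delta u = \lambda u$, extend it to $Q$ by odd reflection, setting $\tilde u = -u\circ R$ on $T'$. Since $u$ vanishes on the hypotenuse, $\tilde u$ is continuous across it, and because the odd extension across a straight segment of a solution of $(\Delta+\lambda)u=0$ vanishing on that segment is again a solution, $\tilde u$ is a genuine Dirichlet eigenfunction on $Q$ with $\tilde u\circ R = -\tilde u$. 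Conversely, any $R$-odd Dirichlet eigenfunction on $Q$ restricts to a Dirichlet eigenfunction on $T$, since it vanishes on the hypotenuse (the fixed set of $R$) and already vanishes on the two legs.

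This reduces the theorem to decomposing the $(-1)$-eigenspace of $R$ inside the Dirichlet eigenspaces of $Q$. The Dirichlet eigenfunctions of $Q$ are $\sin(m\pi x)\sin(n\pi y)$ for integers $m,n\ge 1$, with eigenvalue $\pi^2(m^2+n^2)$, and these form an orthogonal basis of $L^2(Q)$. The next step is to compute the action of $R$ on a basis element. Using the identity $\sin(k\pi(1-t)) = (-1)^{k+1}\sin(k\pi t)$, I would obtain
\[
R[\sin(m\pi x)\sin(n\pi y)] = (-1)^{m+n}\sin(n\pi x)\sin(m\pi y),
\]
so $R$ interchanges the ordered pair $(m,n)\leftrightarrow(n,m)$ up to the sign $(-1)^{m+n}$. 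Writing $f = \sin(n\pi x)\sin(m\pi y)$ and $g = \sin(m\pi x)\sin(n\pi y)$, the combination $c f + d g$ is $R$-odd precisely when $c = (-1)^{m+n+1}d$; this is $c=d$ when $m,n$ have opposite parity and $c=-d$ when they have the same parity, exactly the stated constraints. The diagonal terms $m=n$ yield $R$-even functions and so never occur, which explains the restriction $m\ne n$.

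The last step is normalization. Because $u$ is $R$-odd, $u^2$ is $R$-even, whence $\int_T u^2 = \tfrac12\int_Q \tilde u^2$. Using $\int_Q \sin^2(a\pi x)\sin^2(b\pi y)\,dV = \tfrac14$ and the orthogonality of distinct sines to kill the cross term $\int_Q fg = 0$ (valid since $m\ne n$), I would get $\int_T |u_{mn}|^2 = \tfrac18(c_{mn}^2 + d_{mn}^2)$; setting this equal to $1$ together with $c_{mn}^2 = d_{mn}^2$ gives $c_{mn}^2 = d_{mn}^2 = 4$.

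The main obstacle is the rigorous justification of the unfolding correspondence: that the odd reflection of $u$ is genuinely a smooth eigenfunction on $Q$ rather than merely a piecewise one, and that every Dirichlet eigenfunction on $T$ arises this way. The cleanest route is a Schwarz-type reflection argument, showing that $\tilde u$ is continuous and satisfies $(\Delta+\lambda)\tilde u = 0$ distributionally on all of $Q$ (the normal derivative matches across the hypotenuse because the odd symmetry forces the one-sided derivatives to agree there), after which elliptic regularity upgrades $\tilde u$ to a classical eigenfunction. Completeness of the square's eigenbasis then shows that the $R$-odd combinations listed above span the Dirichlet eigenspaces of $T$; a minor point to address is that number-theoretic coincidences can make $\pi^2(m^2+n^2)$ the eigenvalue of several unordered pairs $\{m,n\}$, in which case the eigenfunctions of that eigenvalue are linear combinations of the corresponding building blocks, each still of the stated form.
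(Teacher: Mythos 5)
Your proposal is correct and follows essentially the same route as the paper: reflect the triangle across the hypotenuse to the unit square, use the known Dirichlet eigenbasis there to get exhaustiveness, and impose vanishing on $\{y=1-x\}$ via the identity $\sin(k\pi(1-t))=(-1)^{k+1}\sin(k\pi t)$, which yields exactly the stated parity constraint on $c_{mn},d_{mn}$. Your write-up is more careful than the paper's (explicit odd-reflection/Schwarz argument for the unfolding, the $R$-odd eigenspace decomposition, and the normalization computation giving $c_{mn}^2=d_{mn}^2=4$), but the underlying idea is identical.
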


\begin{proof}
We will show that these functions are exhaustive, satisfy the boundary conditions, and satisfy the eigenfunction equation. We achieve this expression by noticing that reflecting $T$ across the line $y= 1 - x$ gives a square. The eigenfunctions of the Laplacian on a square are well known, so we know immediately that this list is exhaustive.   We then just have to check all of the usual requirements to verify these are indeed eigenfunctions on the isosceles triangle. 

Clearly $x = 0 \implies u_{mn} = 0$ and  $y = 0 \implies u_{mn} = 0$. Checking $y=1-x$ gives the following expression:

\begin{align*}
    u_{m n}(x,1-x) &= c_{mn}\sin(n\pi x)\sin(m\pi - m\pi x) + d_{mn}\sin(m\pi x)\sin(n\pi - n\pi x) \\
    &= (-1)^{m+1}c_{mn}\sin(n\pi x)\sin(m\pi x) + (-1)^{n+1}d_{mn}\sin(m\pi x)\sin(n\pi x)
\end{align*}

That $u_{m n}$ solves the eigenfunction equation carries over from the fact that these are restricted eigenfunctions of the square. A simple computation gives the eigenvalue as $h^{-2} = \pi^2(n^2+m^2)$ which is the same as in the square case.
\end{proof}

\subsection{Calculating the Volume Integral for the Right Isosceles Triangle}

One of the metrics we are interested in is $\int_{T}|h\partial_y u_{m n}|^2 dV$. We will refer to this as the ``$y$ volume integral'' for expository convenience.  The derivative integrals and the function integrals are related by the equation $\int_{T}|h\partial_x u_{mn}|^2 + |h\partial_y u_{mn}|^2 dV = \int_{T}|u_{mn}|^2 dV = 1$. This expression can be achieved by simple integration by parts, as we have
\begin{equation}
    h^2\int_{T}\partial_x u_{mn}\partial_x u_{mn} + \partial_y u_{mn}\partial_y u_{mn} dV  
    = \int_T u_{mn}(-h^2 \Delta u_{mn})dV = \int_Tu_{mn}^2dV,
\end{equation}
where the boundary terms are zero as we assume Dirichlet boundary conditions, and the last substitution uses $u_{mn}$ being an eigenfunction.

Quantum ergodicity can be interpreted as most of the eigenfunctions tending towards equidistribution, and a consequence of this is the volume integrals of both derivatives tending to $\frac{1}{2}$. In the simple case of the right isosceles triangle, we have equality in $L^2$ norms of $\partial_xu$ and $\partial_yu$ by symmetry, so we in fact have equality for every eigenvalue. As the volume metrics are completely understood in this case, it is natural to investigate the analogous metrics on the boundary as well.

Later on, the $y$ volume integral will be an important metric throughout this paper as a way to test quantum ergodicity. We can calculate an integral over the entire domain to test if our functions are quantum ergodic compliant, which is far easier numerically than dealing with subsets of the domain.

\subsection{Showing Equidistribution fails for subsets of the boundary of the Right Isosceles Triangle}

To begin addressing the question of what happens on subsets of sides, we will explore the amount of the Neumann data on one half of the side on the $x$-axis compared to the other. As the sum of the data on both halves of the bottom side is constant, we only consider the bottom left face. This calculation is the same for the left face, as we have $xy$ symmetry. As such we will define:

\begin{equation}
    I_l(m,n) = \frac{1}{2}\int_0^{1/2}|h\partial_\nu u_{mn}(x,0)|^2dx
\end{equation}
\begin{equation}
    I_r(m,n) = \frac{1}{2}\int_{1/2}^{1}|h\partial_\nu u_{mn}(x,0)|^2dx.
\end{equation}

We always have $I_l(m,n) + I_r(m,n) = 1$ by the result for Neumann data on the entire side in \cite{Chr-tri}. $I_l = I_r = \frac{1}{2}$ represents equidistribution on the two halves. This would not be enough to say the Neumann data is uniformly distributed, but we will see almost immediately that equidistribution does not hold even in this simple case. 

\begin{theorem}
There exists $m,n$ such that $I_l(m,n)\neq \frac{1}{2}$. Moreover, the subsequence $I_l(k,k+1)$ converges to a value other than $1/2$. 
\end{theorem}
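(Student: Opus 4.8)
The plan is to reduce everything to an explicit one-dimensional Fourier computation, since on the right isosceles triangle the eigenfunctions $u_{mn}$ are given in closed form. First I would compute the Neumann data on the bottom side $\{y=0\}$. There the outward normal is $-\partial_y$, and differentiating $u_{mn} = c_{mn}\sin(n\pi x)\sin(m\pi y) + d_{mn}\sin(m\pi x)\sin(n\pi y)$ gives $\partial_y u_{mn}(x,0) = \pi\big(c_{mn}m\sin(n\pi x) + d_{mn}n\sin(m\pi x)\big)$. Using the eigenvalue relation $h^2 = 1/(\pi^2(m^2+n^2))$, this yields
\[
|h\partial_\nu u_{mn}(x,0)|^2 = \frac{1}{m^2+n^2}\big(c_{mn}m\sin(n\pi x) + d_{mn}n\sin(m\pi x)\big)^2,
\]
so that $I_l(m,n)$ becomes a sum of three elementary integrals over $[0,1/2]$.

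Next I would expand the square and integrate termwise. The diagonal terms are immediate: since $\sin(n\pi)=0$ one has $\int_0^{1/2}\sin^2(n\pi x)\,dx = 1/4$, and likewise for $m$; together with $c_{mn}^2 = d_{mn}^2 = 4$ these contribute exactly $1/2$ to $I_l$. The whole question therefore reduces to the cross term $\int_0^{1/2}\sin(m\pi x)\sin(n\pi x)\,dx$, which I would evaluate with a product-to-sum identity. The crucial structural observation is a parity dichotomy: when $m$ and $n$ have the same parity both $n\pm m$ are even, the cross integral vanishes, and combined with $c_{mn} = -d_{mn}$ one gets $I_l(m,n) = 1/2$ exactly, so equidistribution holds. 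When $m$ and $n$ have opposite parity one has $c_{mn} = d_{mn}$, the cross integral is nonzero, and
\[
I_l(m,n) = \frac{1}{2} + \frac{4mn}{m^2+n^2}\int_0^{1/2}\sin(m\pi x)\sin(n\pi x)\,dx,
\]
which is generically different from $1/2$; any opposite-parity pair (say $m=1,n=2$) then proves the first assertion.

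For the subsequence $I_l(k,k+1)$ (consecutive integers, hence always of opposite parity) I would pass to the limit in the formula above. With $n-m=1$ and $n+m=2k+1$ the cross integral evaluates to $\tfrac{1}{2\pi}\big(1 - \tfrac{(-1)^k}{2k+1}\big) \to \tfrac{1}{2\pi}$, while $\tfrac{4k(k+1)}{k^2+(k+1)^2}\to 2$, so that $I_l(k,k+1) \to \tfrac{1}{2} + \tfrac{1}{\pi} \neq \tfrac{1}{2}$. I expect no serious analytic obstacle here, since the computation is entirely explicit; the only real content is getting the bookkeeping right, namely correctly matching the parity constraints on $c_{mn},d_{mn}$ to the vanishing or non-vanishing of the cross integral, and choosing a subsequence (consecutive integers) for which the cross integral has a clean nonzero limit. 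The conceptually interesting point, which the proof makes transparent, is that the failure of equidistribution is a parity effect: it is invisible to same-parity eigenfunctions and driven entirely by the interference cross term.
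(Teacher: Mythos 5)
Your proposal is correct and follows essentially the same route as the paper: a direct computation of $|h\partial_\nu u_{mn}(x,0)|^2$, a parity dichotomy showing the cross term $\int_0^{1/2}\sin(m\pi x)\sin(n\pi x)\,dx$ vanishes for same parity and survives (with $c_{mn}d_{mn}=4$) for opposite parity, and the subsequence $(k,k+1)$ giving the limit $\tfrac12+\tfrac1\pi$. The only cosmetic difference is that the paper organizes the computation through an explicit antiderivative $F$ evaluated at $0$, $\tfrac12$, and $1$, while you expand the square and integrate termwise; the resulting formulas and the limiting value agree.
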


\begin{proof}
On the bottom side the normal derivative is just $-\partial_y$. By direct calculation:
\begin{align*}
    \Bigl|h\partial_\nu u|_{y=0}\Bigr|^2 &  = h^2\Bigl(c^2_{mn}m^2\pi^2\sin^2(n\pi x) + 2c_{mn}d_{mn}\pi^2 n m\sin(n\pi x)\sin(m\pi x) \\
    & \quad +  d^2_{mn}n^2\pi^2\sin^2(m\pi x)\Bigr),
\end{align*}
which has the following anti-derivative $F$ ($m \neq n$) using basic trig identities:
\begin{align*}
    F(x)& = h^2c^2_{mn}m^2\pi^2\Bigl(\frac{1}{2}x - \frac{1}{4n\pi}\sin(2n\pi x)\Bigr)\\
    & +h^2c_{mn}d_{mn}\pi^2 n m\Bigl(\frac{1}{\pi(n-m)}\sin(\pi(n - m)x)-\frac{1}{\pi(n+m)}\sin(\pi(n + m)x)\Bigr)\\
    &+  h^2d^2_{mn}n^2\pi^2\Bigl(\frac{1}{2}x - \frac{1}{4m\pi}\sin(2m\pi x)\Bigr) + C
\end{align*}
\
This lets us calculate explicitly:

\begin{align*}
    &F(0) &&= 0, \\
    &F(1/2) &&= \frac{h^2\pi^2(c^2_{mn}m^2  + d^2_{mn}n^2)}{4}+ h^2c_{mn}d_{mn}\pi n m\Bigl(\frac{\sin(\frac{1}{2}\pi(n - m)}{(n-m)}-\frac{\sin(\frac{1}{2}\pi(n + m))}{(n+m)}\Bigr)\\
    & &&=  1 + h^2c_{mn}d_{mn}\pi n m\Bigl(\frac{\sin(\frac{1}{2}\pi(n - m)}{(n-m)}-\frac{\sin(\frac{1}{2}\pi(n + m))}{(n+m)}\Bigr), \text{ and}\\
    &F(1) &&= \frac{h^2}{2}(c^2_{mn}m^2\pi^2 + d^2_{mn}n^2\pi^2) = 2.
\end{align*}

This gives us the following:
\begin{align*}
    2I_l(m,n) & = F(1/2) - F(0) \\
    & = 1 + h^2c_{mn}d_{mn}\pi n m\Bigl(\frac{\sin(\frac{1}{2}\pi(n - m))}{(n-m)}-\frac{\sin(\frac{1}{2}\pi(n + m))}{(n+m)}\Bigr).
\end{align*}

Note that if $m+n$ is even, then $2I_l(m,n) = 1$. We will then consider situations where $m+n$ is odd, which forces $c_{mn}d_{mn} = 4$ as $m+n$ is odd when $m$ and $n$ have different parities. Furthermore, as we push $m$ and $n$ to infinity, the term multiplied by $\frac{1}{n+m}$ will go to 0 as $h^2 = (\pi^2n^2+\pi^2m^2)^{-1}$. We will numerically show what all of these values are later on, but to construct our subsequence consider $m_k = k$ and $n_k = k+1$. This is a subsequence for which the terms multiplied by $\frac{1}{n-m}$ will have the largest magnitude. Restricting to this subsequence and plugging in exact values for $h^2c_{mn}d_{mn}$ gives us:

\begin{align*}
    2I_1(m_k,n_k) &= 1 + 4(\pi^2(2k^2+2k+1))^{-1}\pi(k^2+k)\Bigl(\sin(\frac{\pi}{2}) - \frac{\sin(\frac{\pi}{2}(2k+1))}{2k+1}\Bigr)\\
    &\sim 1 + \frac{2}{\pi} + \mathcal{O}(k^{-1})\\
\end{align*}

This implies that, for this subsequence of proportions $I_l(k,k+1)$, we have that $I_l(k,k+1) \rightarrow \frac{1}{2} + \frac{1}{\pi} \approx .8183$. It is then immediately the case that $I_r(k,k+1)\rightarrow \approx .1817$. This is our subsequence that does not equidistribute on subsets in the limit.
\end{proof}

The lack of equidistribution on subsets of the sides, even in the limit, is more surprising than the $y$ volume integral result. This contradicts the original conjecture by the first author that there was a uniform distribution in the limit. In this case the long term behavior of these proportions can be completely described. The following computation is identical to the previous one but done in generality.

\begin{corollary}
Let $m$ and $n$ be integers such that $n-m = j$ where $j$ is an odd integer. Then we have an explicit formula for $I_l(m,n).$

\end{corollary}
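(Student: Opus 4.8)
The plan is to specialize the exact identity for $2 I_l(m,n)$ obtained in the previous theorem, rather than to restart the integration by parts. That computation produced, for $m \neq n$,
\begin{align*}
2 I_l(m,n) = 1 + h^2 c_{mn} d_{mn}\, \pi n m \left( \frac{\sin\left(\tfrac{1}{2}\pi(n-m)\right)}{n-m} - \frac{\sin\left(\tfrac{1}{2}\pi(n+m)\right)}{n+m} \right),
\end{align*}
valid for all admissible $m,n$. Since the hypothesis is $n - m = j$ with $j$ odd, the corollary follows simply by evaluating the two sines and substituting the normalization constants; no new analytic input is required. The content of the statement is precisely the resulting closed form, so the goal of the proof is to produce and simplify it.

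First I would record the consequences of $j$ being odd. Odd $j = n-m$ forces $m$ and $n$ to have opposite parity, so by the stated constraints on the coefficients $c_{mn} = d_{mn}$, whence $c_{mn} d_{mn} = c_{mn}^2 = 4$; moreover $n + m$ is then also odd. Next I substitute $h^2 = \left( \pi^2 (n^2 + m^2) \right)^{-1}$ to clear the $h$ dependence. The two sine factors are then evaluated at half-integer multiples of $\pi$: for any odd integer $\ell$ one has $\sin\left( \tfrac{1}{2} \pi \ell \right) = (-1)^{(\ell - 1)/2}$, which applies to both $\ell = j$ and $\ell = n+m$. Collecting these gives the explicit formula
\begin{align*}
I_l(m,n) = \frac{1}{2} + \frac{2 n m}{\pi(n^2 + m^2)} \left( \frac{(-1)^{(j-1)/2}}{j} - \frac{(-1)^{(n+m-1)/2}}{n+m} \right).
\end{align*}

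The only real care needed is the bookkeeping of the parity signs, and here a small simplification is worthwhile: writing $n = m + j$ gives $n + m = 2m + j$, so $(-1)^{(n+m-1)/2} = (-1)^{m}(-1)^{(j-1)/2}$, and the common factor can be pulled out to yield the cleaner form
\begin{align*}
I_l(m,n) = \frac{1}{2} + \frac{2 n m \,(-1)^{(j-1)/2}}{\pi(n^2 + m^2)} \left( \frac{1}{j} - \frac{(-1)^{m}}{n+m} \right).
\end{align*}
As a consistency check, setting $j = 1$, $m = k$, $n = k+1$ makes $(-1)^{(j-1)/2} = 1$ and the bracket equal to $1 - (-1)^k/(2k+1)$; since $2nm/\left(\pi(n^2+m^2)\right) \to 1/\pi$ and the alternating term vanishes as $k \to \infty$, this recovers $I_l(k,k+1) \to \tfrac{1}{2} + \tfrac{1}{\pi}$, exactly matching the theorem. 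There is no substantive obstacle beyond keeping the two independent parity conditions ($j$ odd and $n+m$ odd) straight; the point to emphasize is that the formula is \emph{exact} for every such pair $m,n$, not merely asymptotic, which is what allows one to read off the limiting behavior along any diagonal family $n - m = j$.
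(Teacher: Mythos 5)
Your proposal is correct and follows essentially the same route as the paper: both specialize the exact identity for $2I_l(m,n)$ from the preceding theorem to $n=m+j$, use $c_{mn}d_{mn}=4$ and $h^2=(\pi^2(n^2+m^2))^{-1}$, and evaluate the sines at half-integer multiples of $\pi$ (your sign $(-1)^{(j-1)/2}$ is exactly the paper's $\delta(j)$). The only cosmetic difference is that you keep the fully explicit exact formula and verify consistency with the $j=1$ limit, while the paper passes directly to the asymptotic $I_l\sim \frac{1}{2}\bigl(1+\delta(j)\frac{2}{j\pi}\bigr)+\mathcal{O}(m^{-1})$.
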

\begin{proof}
We proceed in the same manner as the previous proof. By plugging in our assumed values we have:
\begin{align*}
    2I_l(m, m+j) &= 1 + 4\pi^{-1}(2m^2+2mj+j^2)^{-1}(m^2+mj)(\frac{\sin(\frac{\pi}{2}j)}{j} - \frac{\sin(\frac{\pi}{2}(2m+j))}{2m+j})\\
    &\sim 1 + \delta(j)\frac{2}{j\pi} + \mathcal{O}(m^{-1})\\
\end{align*}
and therefore 
\begin{align*}
    I_l(m,n) \sim \frac{1}{2}(1 + \delta(j)\frac{2}{j\pi} + \mathcal{O}(k^{-1}))\\
    I_r(m,n) \sim \frac{1}{2}(1 - \delta(j)\frac{2}{j\pi} + \mathcal{O}(k^{-1})),
\end{align*}
where $\delta(j) = 1$ if $j \equiv 1$ (mod 4) and $\delta(j) = -1$ if $j \equiv 3$ (mod 4).
\end{proof}
This computation also shows that, in the limit, the running average of these two values will both be 1/2. The subsequence of $m,n$ such that they are separated by a fixed integer is density 0 in the sequence of $m,n$. We can also see that the limit of these subsequences, $I_l(m,m+j), I_r(m,m+j)$ goes to $1/2$ when we take the separation integer $k$ to infinity. This ensures via a straightforward limit argument that the running average of each piece also goes to $1/2$.

The reason for this can clearly be seen in the explicit computations, as $m, n$ values that are close together produce disturbances whose magnitude is not changed when $m$ and $n$ are pushed to infinity so long as that separation is maintained. However, encountering $m$ and $n$ pairs with that separation becomes less and less likely as $m$ and $n$ increase. All of this has been numerically confirmed.

\begin{figure}[h]
\begin{subfigure}{0.5\textwidth}
\includegraphics[width=1\linewidth, height=5cm]{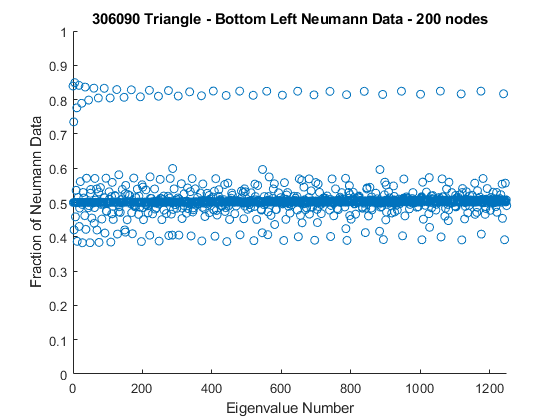} 
\caption{Computed Bottom Left Neumann Data for the Right Isosceles}
\label{fig:subim1}
\end{subfigure}
\begin{subfigure}{0.5\textwidth}
\includegraphics[width=1\linewidth, height=5cm]{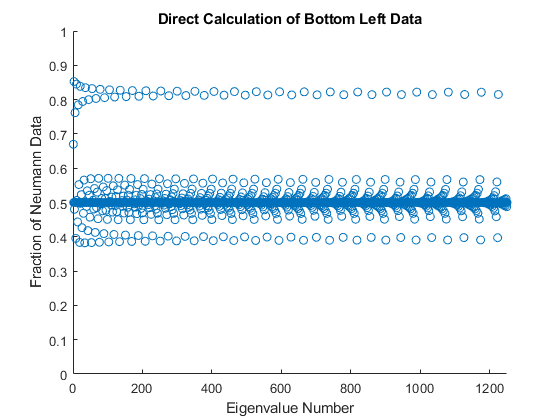}
\caption{Plot of Bottom Left Neumann Data using Derived Formula}
\label{fig:subim2}
\end{subfigure}
\caption{Bottom Left Neumann Data Plots: Computed and Analytical}
\label{fig:image2}
\end{figure}

These two plots are not exactly the same as the direct calculation orders points differently. Moreover, the accuracy of the boundary integrals, especially because we are dividing them, is not enough to perfectly align these graphs.

This result establishes that equidistribution fails even on simple subsets of simple triangles. In this next section we will expand this result to state these proportions need not even be bounded.

\newpage

\section{Proof of Theorem \ref{T:2}}

In this section, we use the result of Marklof-Rudnick \cite{Marklof-Rudnick} to prove Theorem \ref{T:2}. The idea is to compare the integrals of $|h \p_x u|^2$ to those of $|u|^2$ in strips in the triangle, and then use the results from \cite{Chr-tri} to compare the integrals of $|h \p_x u|^2$ to boundary integrals of Neumann data.
\begin{proof}
We drop the subscript and subsequence notation and simply write $u$ for our density one subsequence.

On side $F_1$, the normal derivative is $\p_\nu = -\p_x$, on $F_2$ the
normal derivative is $\p_\nu = - \p_y$, and on $F_3$, the tangent
derivative is
$\p_\tau = \gamma^{-1} ( a\p_x - \p_y)$ and the normal derivative is 
$\p_\nu = \gamma^{-1} (  \p_x + a\p_y )$.  Here $\gamma = (1 +
a^2)^\half$ is the normalizing constant.  That means that on $F_1$, $\p_y u
= 0$, on $F_2$, $\p_x u = 0$, and on $F_3$, $\p_x u = \frac{1}{\gamma}
\p_\nu u$ and $\p_y u = \frac{a}{\gamma} \p_\nu$ as usual.

Fix $0 < \beta < a$ and $\delta>0$  independent of $h$, with $\delta$ sufficiently small so that $0 < \beta - \delta^2 < \beta + \delta < \beta + \delta + \delta^2 < a$.  Let $\chi(x)$ be a smooth function satisfying 
\begin{itemize}
\item $\chi(x) \equiv 0 \text{ for } 0 \leq x \leq \beta - \delta^2$,

\item $\chi(x) \equiv 1 \text{ for } \beta + \delta + \delta^2 \leq x \leq a$,

\item $\chi'(x) \geq 0$,

\item $\chi' = \frac{1}{\delta} + \O ( \delta) \text{ for } \beta \leq x \leq \beta + \delta$

\item $|\chi^{(k)} | = \O (\delta^{-k})$.
\end{itemize}
See Figure \ref{F:chi} for a sketch of such a function.

    \begin{figure}
\hfill
\centerline{
\begingroup%
  \makeatletter%
  \providecommand\color[2][]{%
    \errmessage{(Inkscape) Color is used for the text in Inkscape, but the package 'color.sty' is not loaded}%
    \renewcommand\color[2][]{}%
  }%
  \providecommand\transparent[1]{%
    \errmessage{(Inkscape) Transparency is used (non-zero) for the text in Inkscape, but the package 'transparent.sty' is not loaded}%
    \renewcommand\transparent[1]{}%
  }%
  \providecommand\rotatebox[2]{#2}%
  \newcommand*\fsize{\dimexpr\f@size pt\relax}%
  \newcommand*\lineheight[1]{\fontsize{\fsize}{#1\fsize}\selectfont}%
  \ifx\svgwidth\undefined%
    \setlength{\unitlength}{324.86925888bp}%
    \ifx\svgscale\undefined%
      \relax%
    \else%
      \setlength{\unitlength}{\unitlength * \real{\svgscale}}%
    \fi%
  \else%
    \setlength{\unitlength}{\svgwidth}%
  \fi%
  \global\let\svgwidth\undefined%
  \global\let\svgscale\undefined%
  \makeatother%
  \begin{picture}(1,0.28318107)%
    \lineheight{1}%
    \setlength\tabcolsep{0pt}%
    \put(0,0){\includegraphics[width=\unitlength,page=1]{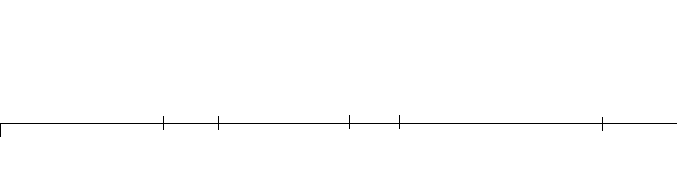}}%
    \put(0.00408913,0.0392053){\makebox(0,0)[lt]{\lineheight{1.25}\smash{\begin{tabular}[t]{l}$0$\end{tabular}}}}%
    \put(0.17580627,0.04744765){\makebox(0,0)[lt]{\lineheight{1.25}\smash{\begin{tabular}[t]{l}$\beta-\delta^2$\end{tabular}}}}%
    \put(0.31649805,0.04964561){\makebox(0,0)[lt]{\lineheight{1.25}\smash{\begin{tabular}[t]{l}$\beta$\end{tabular}}}}%
    \put(0.46312859,0.05057559){\makebox(0,0)[lt]{\lineheight{1.25}\smash{\begin{tabular}[t]{l}$\beta + \delta$\end{tabular}}}}%
    \put(0.59323207,0.05135761){\makebox(0,0)[lt]{\lineheight{1.25}\smash{\begin{tabular}[t]{l}$\beta + \delta + \delta^2$\end{tabular}}}}%
    \put(0.8876344,0.05309071){\makebox(0,0)[lt]{\lineheight{1.25}\smash{\begin{tabular}[t]{l}$a$\end{tabular}}}}%
    \put(0,0){\includegraphics[width=\unitlength,page=2]{chi.pdf}}%
  \end{picture}%
\endgroup%
}
\caption{\label{F:chi}  The function $\chi$.}
\hfill
\end{figure}

Let $X = \chi(x) \p_x$, and run the
usual Rellich commutator argument as in \cite{Chr-tri}:
\[
\int ([ -h^2 \Delta -1 , X] u) \bu dV = -2 \int (\chi' h^2 \p_x^2 u)
\bu dV + \O(h) = 2 \int \chi' | h \p_x u |^2 dV + \O(h).
\]
Let $\Omega_\beta =  \Omega \cap \{ \beta -\delta^2 \leq x \leq \beta +
\delta + \delta^2 \}$ so that $\supp \chi' \subset \Omega_\beta$.  Further let $\tOmega_\beta = \Omega \cap \{ \beta \leq x \leq \beta + \delta \}$ so that $\chi' = \delta^{-1} + \O(\delta)$ on $\tOmega_\beta$.

We write
\begin{align}
2 \int_\Omega \chi' | h \p_x u |^2 dV & = 2 \int _{\Omega_\beta} \chi' | h \p_x u |^2 dV \notag \\
& \leq  2  \int_{\Omega_\beta}  \chi' (|h \p_x u |^2 + | h \p_y u |^2 ) dV  \notag \\
& = 2\int_{\Omega_\beta} \chi' ( -h^2 \Delta u ) \bu dV 
- 2\int_{\Omega_\beta}(h \nabla \chi') \cdot (h \nabla u) \bu dV \\
& = 2\int_{\Omega_\beta} \chi' ( -h^2 \Delta u ) \bu dV 
+ \O(h \delta^{-2}) \notag \\
& = 2\int_{\Omega_\beta}  \chi' |        u |^2  dV  + \O(h \delta^{-2}) \notag \\
& \leq 2 (\delta^{-1}+ \O(\delta)) \int_{\Omega_\beta} | u |^2 dV + \O(h \delta^{-2}).  \notag 
\end{align}

 We have
\[
\Area ( \Omega_\beta) = \left[\frac{ 1 - \frac{(\beta-\delta^2)}{a} + 1 - \frac{(\beta + \delta + \delta^2)}{a}}{2} \right] ( \delta + 2 \delta^2) = (1 - \frac{\beta}{a} )\delta + \O(\delta^2).
\]
Hence
\[
\frac{ \Area ( \Omega_\beta)}{\Area(\Omega)} = \frac{(1 - \frac{\beta}{a}) \delta}{a/2} + \O(\delta^2).
\]
Then the result of Marklof-Rudnick \cite{Marklof-Rudnick} implies
\[
2 (\delta^{-1}+ \O(\delta)) \int_{\Omega_\beta} | u |^2 dV = 4 \frac{(1 - \frac{\beta}{a}) }{a} + \O(\delta) + o(1),
\]
so that
\begin{equation}
\int ([ -h^2 \Delta -1 , X] u) \bu dV \leq  4 \frac{(1 - \frac{\beta}{a}) }{a} + \O(\delta) + \O(h \delta^{-2}) +  o(1).
\label{E:brutal-est}
\end{equation}

On the other hand,
\[
\int ([ -h^2 \Delta -1 , X] u) \bu dV = \int_{\p \Omega} \chi(x) (h \p_x u ) h \p_\nu \bu dS.
\]
On $F_1$, $\chi(0) = 0$ and on $F_2$, $\p_x$ is tangential so $\p_x u = 0$.  On $F_3$, $\p_x u = \gamma^{-1} \p_\nu u$, so that
\[
\int ([ -h^2 \Delta -1 , X] u) \bu dV = \gamma^{-1} \int_{F_3} \chi(x) | h \p_\nu u|^2 dS.
\]
Putting this together, 
\begin{equation}
\label{E:bdy-upper}
\gamma^{-1} \int_{F_3} \chi(x) | h \p_\nu u|^2 dS \leq \frac{4}{a} ( 1 - \frac{\beta}{a}) + \O(\delta) + o(1).
\end{equation}
We will use \eqref{E:bdy-upper} to estimate the Neumann data on part of $F_3$.  Since $\chi \equiv 1$ on $\{ \beta + \delta + \delta^2 \leq x \leq a \}$, we have
\begin{equation}
\label{E:bdy-upper-1}
\gamma^{-1} \int_{F_3 \cap \{ \beta + \delta + \delta^2 \leq x \leq a \}} | h \p_\nu u|^2 dS \leq \gamma^{-1} \int_{F_3} \chi(x) | h \p_\nu u|^2 dS \leq \frac{4}{a} ( 1 - \frac{\beta}{a}) + \O(\delta) + o(1).
\end{equation}

We now use another commutator type argument to compare the mass of $h \p_x u$ on the whole triangle to the Neumann data on part of the boundary.  To that end, let $X = (1-x/a) \p_x$.  Then $[-h^2 \Delta -1, X ] = 2a^{-1} h^2 \p_x^2$ so that
\[
\int_\Omega ([-h^2 \Delta -1 , X] u) \bu dV = \frac{2}{a} \int_{\Omega} (h^2 \p_x^2 u) \bu dV = -\frac{2}{a} \int_\Omega | h \p_x u |^2 dV.
\]
On the other hand,
\[
\int_\Omega ([ -h^2 \Delta -1 , X ] u) \bu dV = \int_{\p \Omega} (1-x/a) h \p_x u h \p_\nu \bu dS.
\]
On $F_1$, $x = 0$ so $X = \p_x = -\p_\nu$.  On $F_2$, $\p_x$ is tangential, so that $X u = 0$ on $F_2$.  On $F_3$, we have $\p_x u = \gamma^{-1} \p_\nu u$ as before.  That means
\[
 \int_{\p \Omega} (1-x/a) h \p_x u h \p_\nu \bu dS = -\int_{F_1} | h \p_\nu u|^2 dS + \gamma^{-1} \int_{F_3} (1 - x/a) | h\p_\nu u |^2 dS.
 \]
 From \cite{Chr-tri}, we know $\int_{F_1} | h \p_\nu u|^2 dS = \frac{2}{a}$, so that
 \[
  \int_{\p \Omega} (1-x/a) h \p_x u h \p_\nu \bu dS = -\frac{2}{a} + \gamma^{-1} \int_{F_3}(1 - x/a) | h \p_\nu u|^2 dS.
  \]
 Rearranging, we have
 \begin{equation}
 \label{E:hpx-F3}
   \frac{2}{a} \int_\Omega | h \p_x u |^2 dV = \frac{2}{a} - \gamma^{-1} \int_{F_3}(1 - x/a) | h \p_\nu u|^2 dS.
   \end{equation}

   To get an upper bound on the left hand side, we need a lower bound on the integral
   \[
   \gamma^{-1} \int_{F_3} (1 - x/a) | h \p_\nu u|^2 dS,
 \]
which we do by comparing to the part of the boundary isolated by our cutoff function $\chi$.  $\chi(x) \equiv 1$ for $x \geq \beta + \delta + \delta^2$, and we have an upper bound on the boundary data in this range, not a lower bound.   We write
\begin{align}
\gamma^{-1} & \int_{F_3} (1 - x/a) | h \p_\nu u |^2 dS  \label{E:bdy-2}\\
& = 
  \gamma^{-1} \int_{F_3 \cap \{ x \geq \beta + \delta + \delta^2 \} } (1 - x/a) | h \p_\nu u |^2 dS  \notag \\
  & \quad + 
  \gamma^{-1} \int_{F_3 \cap \{ x \leq \beta + \delta + \delta^2 \}} (1 - x/a) | h \p_\nu u |^2 dS  \notag \\
&   \geq (1 - (\beta + \delta + \delta^2)/a)\gamma^{-1} \int_{F_3 \cap \{ x \leq \beta + \delta + \delta^2 \}} | h \p_\nu u |^2 dS. \notag
\end{align}
We have
\begin{align*}
\gamma^{-1} & \int_{F_3 \cap \{ x \leq \beta + \delta + \delta^2 \}} | h \p_\nu u |^2 dS \\
& = \gamma^{-1} \int_{F_3} | h \p_\nu u |^2 dS - \gamma^{-1} \int_{F_3 \cap \{ x \geq \beta + \delta + \delta^2 \}} | h \p_\nu u|^2
\end{align*}
and now our upper bound \eqref{E:bdy-upper-1} in the region $x \geq \beta + \delta + \delta^2$ is useful.  Again using the main result from \cite{Chr-tri}, we have 
\[
\gamma^{-1} \int_{F_3} | h \p_\nu u |^2 dS = \frac{2}{a},
\]
so 
\begin{align*}
\gamma^{-1} & \int_{F_3 \cap \{ x \leq \beta + \delta + \delta^2 \}} | h \p_\nu u |^2 dS \\
& = \gamma^{-1} \int_{F_3}  | h \p_\nu u |^2 dS - \gamma^{-1} \int_{F_3 \cap \{ x \geq \beta + \delta + \delta^2 \}} | h \p_\nu u|^2 \\
& \geq \frac{2}{a} - \frac{4}{a} ( 1 - \frac{\beta}{a}) + \O(\delta) + o(1).
\end{align*}

Plugging into \eqref{E:bdy-2}, we have
\begin{align*}
\gamma^{-1} & \int_{F_3} (1 - x/a) | h \p_\nu u |^2 dS \\
& \geq (1 - (\beta + \delta + \delta^2)/a)\gamma^{-1} \int_{F_3 \cap \{ x \leq \beta + \delta + \delta^2 \}} | h \p_\nu u |^2 dS \\
& \geq (1 - (\beta + \delta + \delta^2)/a) \left(\frac{2}{a} - \frac{4}{a} ( 1 - \frac{\beta}{a}) + \O(\delta) + o(1) \right).
\end{align*}

Combining with \eqref{E:hpx-F3}, we have
\begin{align*}
\frac{2}{a} \int_\Omega | h \p_x u |^2 dV & = \frac{2}{a} - \gamma^{-1} \int_{F_3}(1 - x/a) | h \p_\nu u|^2 dS \\
& \leq \frac{2}{a} - (1 - (\beta + \delta + \delta^2)/a)
(\frac{2}{a} - \frac{4}{a} ( 1 - \frac{\beta}{a}) + \O(\delta) + o(1)) 
\end{align*}
and rearranging,
\begin{align}
\int_\Omega | h \p_x u |^2 dV & \leq 1-(1 - (\beta + \delta + \delta^2)/a)(1-2( 1 - \frac{\beta}{a}) + \O(\delta) + o(1) \notag \\
& = 1 - (1 - \beta/a)(1-2(1-\beta/a)) + \O(\delta) - o(1). \label{E:last-est}
\end{align}
Optimizing in the variable $(1-\beta/a)$ gives $(1-\beta/a) = 1/4$, or
\[
\int_\Omega | h \p_x u |^2 dV \leq 1-(1/4)(1/2) = 7/8 + \O(\delta) + o(1)
\]
as asserted in the Theorem.

\end{proof}

\begin{remark}
The biggest loss in the proof is from brutally estimating the integral of $| h \p_x u |^2$ in strips by the integral of $| u |^2$, which is clearly a very crude estimate.  It is nevertheless interesting to note that if we knew that the integral of $| h \p_x u |^2$ in strips was {\it half} that of $|u|^2$, which would be predicted by quantum ergodicity, the proof still does not give the expected estimate on the whole triangle.  Indeed, in \eqref{E:brutal-est}, quantum ergodicity would have given 
$ 2\frac{(1 - \frac{\beta}{a}) }{a} + \O(\delta) + o(1)$ instead of $ 4 \frac{(1 - \frac{\beta}{a}) }{a} + \O(\delta) + o(1)$.
 As in the end of the proof, this would give
 \[
 \int_\Omega | h \p_x u |^2 dV \leq
 1 - (1 - \beta/a)(1-(1-\beta/a)) + \O(\delta) - o(1)
\]
in place of \eqref{E:last-est}.  Optimizing again in the variable $(1-\beta/a)$ yields
$(1-\beta/a) = 1/2$, for a bound of $3/4 + \O(\delta) + o(1)$.  So even if we knew more aboud energy distribution compared to distribution, the techniques of proof in this paper give an unsatisfactory answer.

\end{remark}

\begin{remark}
Note this is particular to triangles.  Indeed, if $\Omega = [0 ,
  \pi]^2$, a basis of  eigenfunctions consists of $u_{mn} = c_{mn} \sin (mx)\sin(ny)$,
where $c_{mn} = 2/\pi$ is the appropriate normalization constant.
Let $U \subset \Omega$ be an open set.  We have
\begin{align*}
\int_U | u |^2 dV & = \int_U |c_{mn}|^2 ( 1/2 - 1/2 \cos (2mx))(1/2 -
1/2 \cos(2ny)) dV \\
& = \pi^{-2} \int_U (1 - \cos(2mx) - \cos(2nx) + \cos(2mx) \cos (2ny))
dV \\
& = \frac{\Area(U)}{\Area(\Omega)} + \O(m^{-1} + n^{-1}).
\end{align*}

On the other hand,
\[
\int_\Omega | h \p_x u |^2 dV = \int_\Omega h^2 m^2 (4/\pi^2) | \cos(m
x) \sin (ny)|^2 dV = h^2 m^2,
\]
and similarly $\int_\Omega | h \p_y u |^2 dV = h^2 n^2.$

Suppose we
are interested in $\{ n \geq M m \}$ for large $M$.  Then
$\# \{ m^2 + n^2 \leq R^2 : n \geq Mm \} \sim R^2/M$, so has density
$\sim 1/M >0$, but $\int_\Omega | h \p_x u |^2 dV \leq  M^{-2}$.

This shows that these eigenfunctions with $n \geq M m$ satisfy the spatial
equidistribution as in Marklof-Rudnick:
\[
\int_U | u |^2 dV =  \frac{\Area(U)}{\Area(\Omega)} + \O (Mh)
\]
but do not have the frequency lower bound property $\int_\Omega | h
\p_x u |^2 dV \geq 1/8$.

\end{remark}

\section{An Almost Right Isosceles Triangle}
With the right isosceles studied, it is natural to see what happens when the domain is perturbed slightly. We will investigate the '.99 triangle', or, the triangle with vertices $\{(0,0), (0,1), (.99,0)\}$. Analytical solutions cannot be found, but we can use numerical techniques. Using FreeFEM, an online tool for using the finite element method to solve PDEs, we have calculated the first 1250 eigenfunctions and plotted their relevant data. 

\begin{figure}[h]
\begin{subfigure}{0.5\textwidth}
\includegraphics[width=1\linewidth, height=5cm]{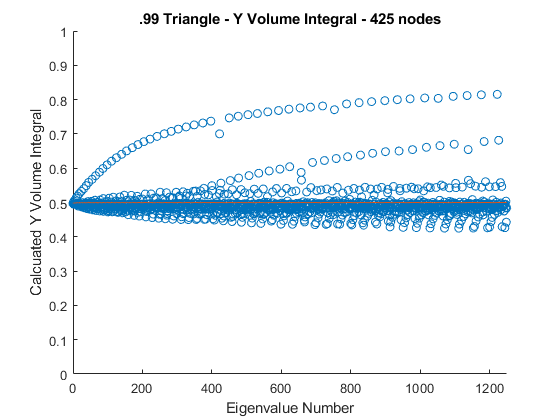} 
\label{fig:subim1}
\end{subfigure}
\begin{subfigure}{0.5\textwidth}
\includegraphics[width=1\linewidth, height=5cm]{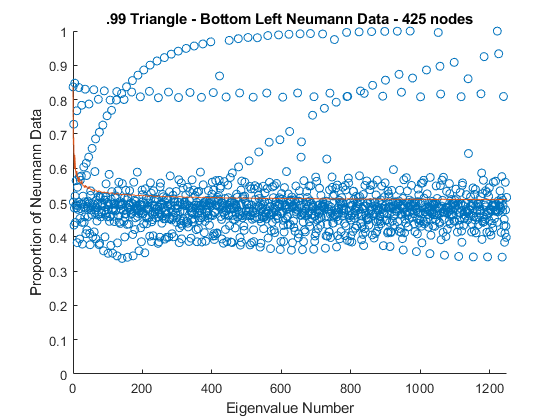}
\label{fig:subim2}
\end{subfigure}
\caption{.99 Triangle - 1250 Eigenvalues - Y Volume Integral and Bottom Left Neumann plots}
\label{fig:image2}
\end{figure}

The $y$ volume integral plot is no longer constant and have noticeable structure. There are at least two branches that can be seen in our range of eigenvalues. These branches correspond to subsequences of eigenfunctions whose $y$ volume integrals seem to not approach $\frac{1}{2}$. There is also a large band with sizable separation from $\frac{1}{2}$. As the energy increases, even the less unusual eigenfunctions that have $y$ volume integrals seem to be spreading out from the value of $\frac{1}{2}$. These behaviors are discussed numerically in the next section. 

The second plot shows the values of $I_l(m,n)$ for the .99 triangle, with the adjustment of the bounds of integration from $(0,1/2)$ to $(0,.495)$. Some of the structure of the plots is carried over from the $y$ volume integral case, but it is less coherent. Moreover, there seem to be subsequences whose bottom left side Neumann  data integrals are  approaching 1, which indicates that all of the Neumann data is congregating on one half of the bottom side. This suggests that even a lower bound for Neumann data on subsets of the boundary may not be possible, at least not for every sequence of eigenfunctions.  

We have verified that the two branches which are apparent in the $y$ volume integral plot are comprised of the same eigenfunctions whose bottom left Neumann integrals approach 1. Similar pictures for other triangles mentioned throughout this paper are in an appendix.

\section{Statistical Analysis of Eigenfunctions on Almost Isosceles Right  Triangles}

In this section, we introduce several new metrics for measuring how far a sequence of eigenfunctions is from having QE or QER type properties.  

\subsection{Introducing Running Averages}
Statements about quantum ergodicity allow for exceptional zero density subsequences. For the $y$ volume integral, we think of $1/2$ as signifying quantum ergodicity but, if the domain was truly ergodic, it is more accurate to state that every positive density subsequence needs to have a running average that converges to $1/2$. Or mathematically, for density 1 subsequence $u_{i_k}$:

\begin{equation}
    a_j = \frac{1}{j}\sum_{k=1}^j\int_T|h\partial_y u_{i_k}|^2 dV \rightarrow \frac{1}{2}.
\end{equation}

We can also say something similar about the proportion of Neumann data on a given side. Here, if the domain was indeed ergodic, for every subsequence of proportions, $I_l(m_j,n_j)$, with positive density we would have

\begin{equation}
   \frac{1}{j}\sum_{k=1}^j I_l(m_j,n_j) \rightarrow \frac{1}{2}.
\end{equation}
The same is of course true for any data defined similarly on subsets of the boundary. 
%

\subsection{Running Averages of Computed Runs}
We compute the running average of relevant metrics in an attempt to get a better understanding of the quantum ergodic properties of our triangle as we depart from the right isosceles case. The '$y$ volume integral' and 'Proportion Bottom Left' metrics are the ones used throughout this paper. A larger node count represents an increase in accuracy, but we found diminishing returns in increasing node counts in our numerics. As such, we considered 200 to be sufficient. Values were computed for the first 1250 eigenfunctions.

\begin{center}
 \begin{tabular}{||c c c c||} 
 \hline
 Triangle & Nodes & $y$ volume integral & Proportion of Bottom Left\\
 \hline\hline
 .99 & 425 & .4998 & .5083\\ 
 \hline
 .98 & 200 & .4996 & .5098\\ 
 \hline
 .97 & 200 & .4996 & .5103\\ 
 \hline
 .96 &200 & .4993 & .5097\\
 \hline
 .95 & 200 & .4992 & .5100\\ 
 \hline
\end{tabular}
\end{center}

The overall trend is consistent across metrics. The further we get from the right isosceles triangle, the farther  the metrics get from the values quantum ergodicity would predict. This is not enough evidence to suggest that these averages converge to a value other than what would be expected if the domain was ergodic, but it does heavily suggest that convergence is at least slower the farther away from isosceles the triangle is. 

\subsection{Percentage of Eigenfunctions Approach}

The issue with the methods previously described in this chapter is that they do not get to the heart of  what we want. Running averages can be influenced, especially at these frequencies, by density zero subsequences which are interesting but not definitive evidence that the domain itself is not ergodic. In service of trying to determine whether these experiments would cause us to expect a positive density subsequence that converges to an unexpected value, we instead shift our focus to percentages of eigenfunctions.

Statements about the density of sequences are extensions of the familiar discrete concept of percentages. They are statements about how common we would expect that particular subsequence to be. A density 1 subsequence, in the high-frequency limit, would be expected to appear for almost every value. As these are limits, there is substantial wiggle room. 

We can use this concept to develop metrics that could indicate whether positive density subsequences of the desired properties exist. Suppose we thought the running average of the $y$ volume integrals for the .99 triangle converged to a value less than .5. Then it would be sufficient to show for every finite $N$, some fixed $\epsilon > 0$, and some other fixed $\delta > 0$, that the percentage of the first $N$ eigenfunctions which have an $y$ volume integral less than $.5 - \epsilon$ is larger than $\delta$ for every $N$. If this condition was met, than the subsequence of all eigenfunctions whose $y$ volume integral is less than $.5 - \epsilon$ would have a density greater than $\delta$. This would show that the domain itself was not ergodic. 

Of course, there is nothing special about viewing the percentage of eigenfunctions below a certain threshold. Because we only need a subsequence of positive density, we can consider all eigenfunctions that have $y$ volume integrals sufficiently far away from .5. In the interest of having a metric that is equally valid regardless of the distribution of $y$ volume integral values, we consider the running percentage of eigenfunctions such that

\begin{equation}
    \Bigl|\int_T|h\partial_y u|^2 - .5\Bigr| > \epsilon
\end{equation}
for varying tolerances $\epsilon$. The values for a selection of runs are in the following table.

\begin{center}
 \begin{tabular}{||c| c c c||} 
 \hline
 Triangle & $\epsilon = .01$ & $\epsilon = .005$ & $\epsilon=.001$\\ [0.5ex] 
 \hline\hline
 .99 & 80.32 & 84.64 & 98.8\\ 
 \hline
 .98 & 82.9 & 93.0 & 99.3\\ 
 \hline
 .97 & 89.1 & 95.7 & 99.6\\ 
 \hline
 .96 & 91.0 & 95.6 & 99.04\\  [1ex] 
 \hline
 .95 & 92.7 & 96.6 & 99.52\\ 
 \hline
\end{tabular}
\end{center}

Perhaps more interesting than the exact numerical values are the trends. All of the graphs for all three thresholds for the .99, .98, .97, .96 and .95 triangles have the same fundamental shape: increasing with a vertical asymptote.

\begin{figure}
    \centering
    \includegraphics[scale=.5]{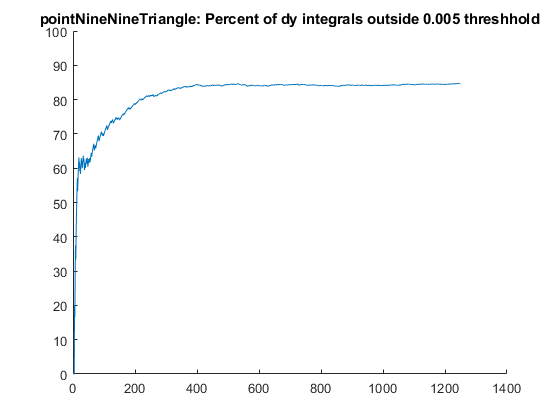}
    \caption{Running Percentage Graph -  Shows monotonic and asymptotic behavior. Similar shape holds for every triangle and rage in the previous table.}
    \label{fig:my_label}
\end{figure}

\subsection{Establishing Triangles with Different Behavior}
An interesting test case is the 30-60-90 triangle. Despite having the spatial  equidistribution property from being a rational planar polygon, it is known to be integrable. This triangle has a lot of symmetries, reflecting it over the y-axis gives the equilateral triangle for example, which is what leads to its integrability. By looking at triangles that are close to the 30-60-90, we can see how sensitive these numerics are. 

We ran two runs with a bottom length of $.575$ and $.58$. The bottom length of the 30-60-90 is $\frac{1}{\sqrt{3}}\approx .5774$, so these other triangles are close the the 30-60-90 but do not enjoy the geometric symmetries that have such a profound effect on the eigenfunctions. They produced the following results:

\begin{center}
 \begin{tabular}{||c| c c c||} 
 \hline
 Triangle & $\epsilon = .01$ & $\epsilon = .005$ & $\epsilon=.001$\\ [0.5ex] 
 \hline\hline
 .58 & 42.6 & 65.0 & 96.8 \\
 \hline
 30-60-90 & 11.7 & 16.4 & 30.9\\ 
 \hline
 .575 & 41.8 & 61.1 & 97.2\\ 
 \hline
\end{tabular}
\end{center}

Not only are the percentages noticeably lower than the other triangles, the shape of the running percentage scatter plot indicates that these numbers are decreasing significantly as the number of eigenvalues increases. This is the type of behavior that would be expected for an ergodic domain, but we see behaviors more in line with the previously discussed runs for the two triangles that are close to the 30-60-90. This complicates our interpretation, as we have a non-ergodic triangle that is displaying behavior that would be expected of an ergodic domain. 

\begin{figure}[h]
\begin{subfigure}{0.5\textwidth}
\includegraphics[width=1\linewidth, height=5cm]{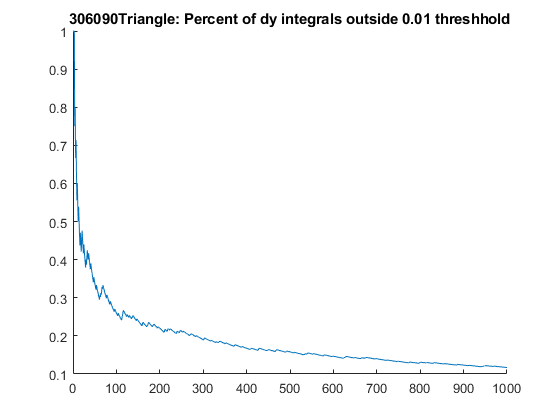} 
\label{fig:subim1}
\caption{30-60-90 - 450 Nodes - 1000 Eigenvalues}
\end{subfigure}
\begin{subfigure}{0.5\textwidth}
\includegraphics[width=1\linewidth, height=5cm]{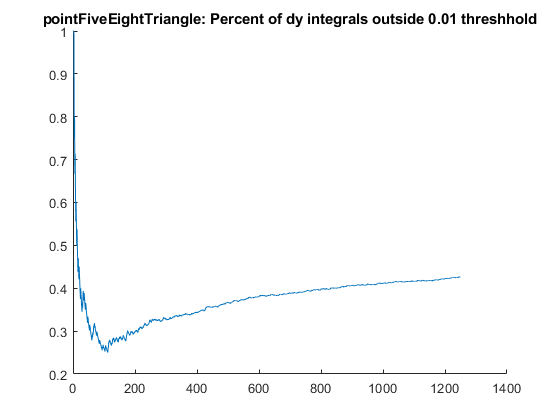}
\caption{.58 triangle - 200 Nodes - 1250 Eigenvalues}
\label{fig:subim2}
\end{subfigure}
\label{fig:image2}
\end{figure}


\section{Accuracy and Sanity Checks}

\newpage
\subsection{Mesh Convergence Test}
Confidence in our numerics increases if we can show convergence in accuracy metrics as  our mesh is refined. To test this, we chose two metrics: one for the eigenvalue and one for the eigenfunctions. 

The maximum eigenvalue difference is simply the largest difference between eigenvalues computed on the different meshes. The $L^2$ running average is the running average of the $L^2$ norm of the difference between the eigenfunctions computed on different meshes. To evaluate this difference, the higher accuracy function is interpolated on the coarser mesh. This adds another source of inaccuracy. 

We compared the 256 node calculations to the 128, 64, and 32 node calculations for the .99 triangle. The first 1000 eigenvalues and eigenfunctions were computed. The table below shows clear convergence on both metrics.

\begin{center}
 \begin{tabular}{|c| c| c|} 
 \hline
 Comparison & Max Eval Diff. & L2 Running Avg.\\
 \hline
 256 and 128 & 8.87 & .0091\\ 
 \hline
 256 and 64 & 122.23 & .0838\\ 
 \hline
 256 and 32 & 377.87 & .4762\\ 
 \hline
\end{tabular}
\end{center}

The 1000th Eigenvalue has a magnitude of around 30,000, so a maximum difference of 8.87 corresponds to about a .03\% difference. This shows we are not gaining a substantial amount of accuracy doubling the perimeter node count once we pass a certain threshold. This gives us confidence that our numerical experiment is well behaved. 

\subsection{Reported Errors}
FreeFEM itself can also report errors. It does this in 3 types, the relative error, absolute error, the backward error. All of these errors are generally increasing, so we will just report the error on the 1250th eigenvalue. 

\begin{center}
 \begin{tabular}{|c| c|} 
 \hline
 Error Type & Value\\
 \hline
 Absolute Error & 8.77e-8\\ 
 \hline
 Relative Error & 3.04e-15\\ 
 \hline
 Backwards Error & 7.292e-12\\ 
 \hline
\end{tabular}
\end{center}

\appendix
\section{Volume and Boundary Data for Near Isosceles Triangles}

\begin{figure}[h]
\begin{subfigure}{0.5\textwidth}
\includegraphics[width=1\linewidth, height=5cm]{pointNineNineYVolumeIntegral.png} 
\label{fig:subim1}
\end{subfigure}
\begin{subfigure}{0.5\textwidth}
\includegraphics[width=1\linewidth, height=5cm]{pointNineNineNeumann.png}
\label{fig:subim2}
\end{subfigure}
\caption{.99 Triangle - 1250 Eigenvalues - Y Volume Integral and Bottom Left Neumann plots}
\label{fig:image2}
\end{figure}
\begin{figure}[h]
\begin{subfigure}{0.5\textwidth}
\includegraphics[width=1\linewidth, height=5cm]{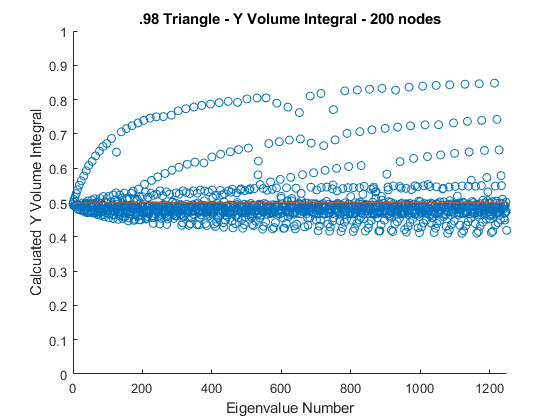} 
\label{fig:subim1}
\end{subfigure}
\begin{subfigure}{0.5\textwidth}
\includegraphics[width=1\linewidth, height=5cm]{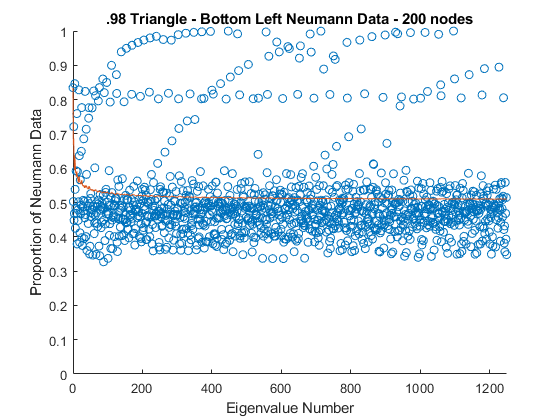}
\label{fig:subim2}
\end{subfigure}
\caption{.98 Triangle - 1250 Eigenvalues - Y Volume Integral and Bottom Left Neumann plots}
\label{fig:image2}
\end{figure}

\begin{figure}[h]
\begin{subfigure}{0.5\textwidth}
\includegraphics[width=1\linewidth, height=5cm]{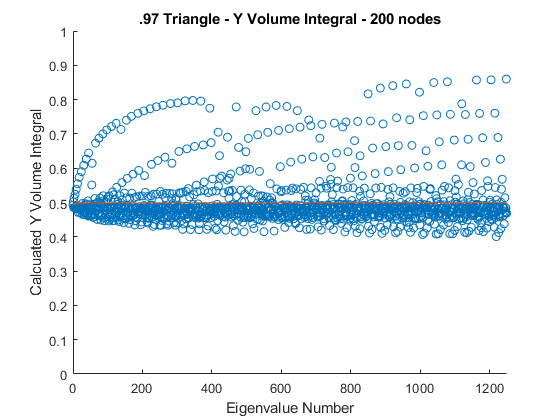} 
\label{fig:subim1}
\end{subfigure}
\begin{subfigure}{0.5\textwidth}
\includegraphics[width=1\linewidth, height=5cm]{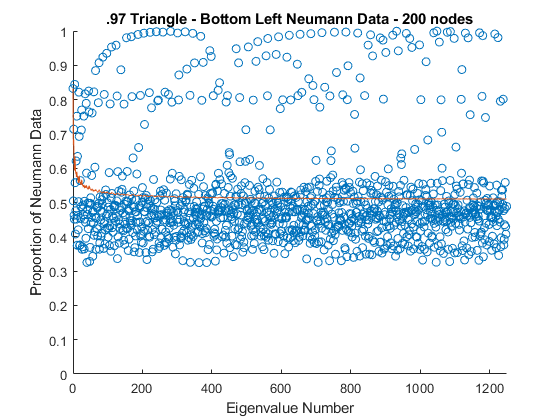}
\label{fig:subim2}
\end{subfigure}
\caption{.97 Triangle - 1250 Eigenvalues - Y Volume Integral and Bottom Left Neumann plots}
\label{fig:image2}
\end{figure}

\begin{figure}[h]
\begin{subfigure}{0.5\textwidth}
\includegraphics[width=1\linewidth, height=5cm]{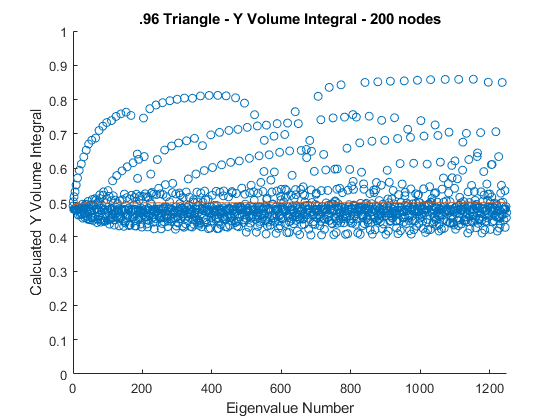} 
\label{fig:subim1}
\end{subfigure}
\begin{subfigure}{0.5\textwidth}
\includegraphics[width=1\linewidth, height=5cm]{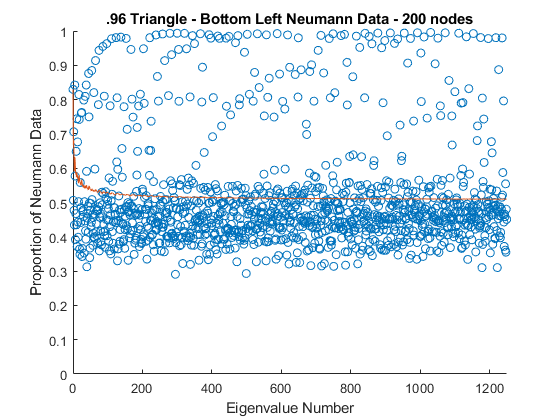}
\label{fig:subim2}
\end{subfigure}
\caption{.96 Triangle - 1250 Eigenvalues - Y Volume Integral and Bottom Left Neumann plots}
\label{fig:image2}
\end{figure}

\begin{figure}[h]
\begin{subfigure}{0.5\textwidth}
\includegraphics[width=1\linewidth, height=5cm]{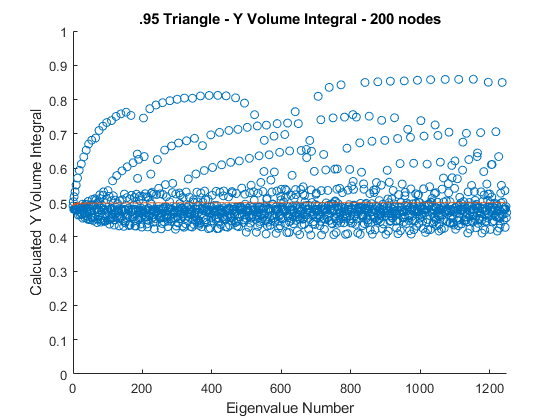} 
\label{fig:subim1}
\end{subfigure}
\begin{subfigure}{0.5\textwidth}
\includegraphics[width=1\linewidth, height=5cm]{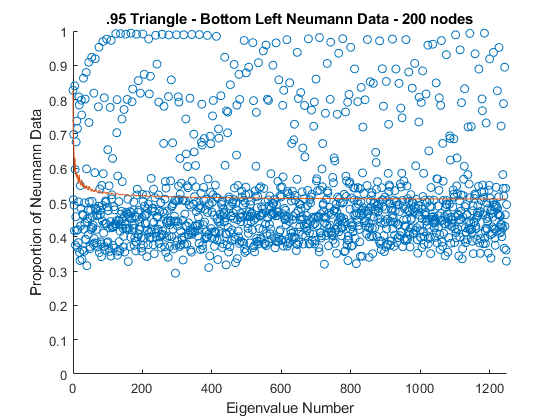}
\label{fig:subim2}
\end{subfigure}
\caption{.95 Triangle - 1250 Eigenvalues - Y Volume Integral and Bottom Left Neumann plots}
\label{fig:image2}
\end{figure}

 \clearpage
 
\newpage
\bibliographystyle{alpha}
\bibliography{main}
\newpage

\end{document}